\newtheorem{Thm}{Theorem}[section]
\newtheorem{Prop}[Thm]{Proposition}
\newtheorem{Def}[Thm]{Definition}
\newtheorem{Def/Thm}[Thm]{Definition/Theorem}
\newtheorem{Cor}[Thm]{Corollary}
\newtheorem{Lemma}[Thm]{Lemma}
\theoremstyle{remark}
\newtheorem{Rmk}[Thm]{Remark}
\numberwithin{equation}{subsection}
\newcommand{\cN}{{\mathcal{N}}}
\newcommand{\cA}{{\mathcal{A}}}
\newcommand{\cI}{{\mathcal{I}}}
\newcommand{\cT}{{\mathcal{T}}}
\newcommand{\cS}{{\mathcal{S}}}
\newcommand{\cG}{{\mathcal{G}}}
\newcommand{\cCH}{{\mathcal{CH}}}
\newcommand{\cJ}{{\mathcal{J}}}
\begin{document}

%
%
%

\title{The Chow ring of relative Fulton--MacPherson space}

\author{fumitoshi Sato}
\address{School of Mathematics\\ Korean Institute for Advanced Study}
\email{fumi@kias.re.kr}

\bibliographystyle{amsplain}


\begin{abstract}
Suppose that $X$ is a nonsingular variety and $D$ is a nonsingular proper subvariety.  
   Configuration spaces of distinct and non-distinct $n$ points in $X$ away from $D$ were constructed by the author and B. Kim in \cite{KS} by using the method of wonderful compactification.  In this paper, we give an explicit presentation of Chow motives and Chow rings of these configuration spaces.
\end{abstract}

\maketitle



\section{Introduction}
Let $X$ be a complex connected nonsingular algebraic variety and let $D$ be a smooth divisor.
  
In \cite{KS}, two generalizations of Fulton--MacPherson spaces were constructed by using the method of wonderful compactifications \cite{Li1}.  Two spaces are following:
\begin{enumerate}
\item A compactification $X^{[n]}_D$ of the configuration space of $n$ labeled points in $X \setminus D$, i.e. "not allowing those points to meets $D$."

\item A compactification $X_D [n]$ of the configuration spaces of $n$ distinct labeled points in $X \setminus D$, i.e. "not allowing those points to meet each other as well as $D$."
\end{enumerate}   

The goal of this paper is to give an explicit presentation of Chow motives and Chow rings of these configuration spaces.  Our main theorems are:

 \begin{Thm}
The Chow ring $A^*(X_D^{ [n]})$ is isomorphic to the polynomial
ring $ A^*(X^n) [x_S]$ modulo the ideal generated by

\begin{enumerate}

\item $x_{S} \cdot x_{T}$ for  $ S, T $ that overlap,

\item $\cJ_{D_{S}/X^n} \cdot x_{S}$ for all $S$,

\item $P_{D_{S}/X^n} (-\Sigma_{S' \supset S} x_{S'})$ for
all $S$.
\end{enumerate}

\end{Thm}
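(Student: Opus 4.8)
The plan is to compute $A^*(X_D^{[n]})$ from the description of this space in \cite{KS} as an iterated blow-up, applying the blow-up formula for Chow rings one center at a time, in the spirit of Fulton and MacPherson's original configuration-space calculation and of Keel's presentation of the Chow ring of a blow-up. Recall from \cite{KS} (built on the wonderful-compactification machinery of \cite{Li1}) that $X_D^{[n]}$ is obtained from $X^n$ by successively blowing up the strict transforms of the loci $D_S\subset X^n$, where $x_i\in D$ for all $i\in S$ and hence $D_S\cong\bigl(\prod_{i\in S}D\bigr)\times\prod_{j\notin S}X$, in an order compatible with inclusion of centers, the deepest (smallest-dimensional, i.e. largest $S$) strata first; the exceptional divisor created over $D_S$ gives the generator $x_S$. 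I would therefore induct along this sequence of blow-ups, maintaining at each stage a presentation of the Chow ring of the partially blown-up space.

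The engine of the induction is the following standard fact: if $Z\hookrightarrow Y$ is a regular embedding of smooth varieties of codimension $d$ for which the restriction $A^*(Y)\to A^*(Z)$ is surjective, then
\[
A^*(\bl_Z Y)\;\cong\;A^*(Y)[x]\,\big/\,\bigl(\cJ_{Z/Y}\cdot x,\ P_{Z/Y}(-x)\bigr),
\]
where $x\mapsto[E]$ is the class of the exceptional divisor, $\cJ_{Z/Y}=\ker(A^*(Y)\to A^*(Z))$, and $P_{Z/Y}\in A^*(Y)[t]$ is any lift of the Chern polynomial $\sum_{i=0}^{d}c_i(N_{Z/Y})\,t^{d-i}$ of the normal bundle. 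Applied with $Y$ the current space and $Z$ the strict transform of $D_S$, this produces precisely the new generator $x_S$, the relation of type (2) (since $\cJ_{D_S/X^n}$ is the defining ideal of $D_S$, which must annihilate the class supported on the exceptional divisor), and the relation of type (3). The appearance of the argument $-\sum_{S'\supseteq S}x_{S'}$ in (3), rather than simply $-x_S$, reflects the fact that when $D_S$ is blown up its strict transform already meets the exceptional divisors of the previously blown-up deeper strata $D_{S'}$ ($S'\supsetneq S$); re-expressing the single-blow-up Chern-polynomial relation in terms of the normal data $P_{D_S/X^n}$ of the \emph{original} $X^n$ therefore introduces exactly these correction terms.

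To run the inductive step I would need two auxiliary inputs. First, the surjectivity hypothesis of the blow-up formula must hold at every stage; via the product structure of the centers this reduces to the surjectivity of the restriction $A^*(X)\to A^*(D)$, which I would confirm under the running hypotheses of \cite{KS} (and which otherwise enters as a necessary assumption). Moreover, by the functoriality of the construction the strict transform of $D_S$ is isomorphic to a product of $|S|$ copies of $D$ with the relative Fulton--MacPherson space on the remaining $n-|S|$ points, so the argument is a simultaneous induction on $n$ in which the asserted presentation is known for the smaller configuration. Second, I must compute $N_{D_S/X^n}$, hence the polynomial $P_{D_S/X^n}$, and track how each exceptional class transforms under the subsequent blow-ups so as to justify the substitution $x\mapsto\sum_{S'\supseteq S}x_{S'}$. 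The relations of type (1) are then accounted for by the combinatorics of the building set: the exceptional divisors $E_S$ and $E_T$ are disjoint in $X_D^{[n]}$ exactly when $\{D_S,D_T\}$ is not a nested set, which happens precisely when $S$ and $T$ overlap, so that $x_S\cdot x_T=0$.

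The main obstacle is the bookkeeping behind relation (3) together with the proof that the three families of relations exhaust the kernel. Completeness is cleanest to establish not by manipulating ideals directly but by a rank comparison: the additive part of the blow-up formula gives, after the full iteration, a free $A^*(X^n)$-module basis for $A^*(X_D^{[n]})$ indexed by nested collections of subsets with explicit exponent bounds determined by the codimensions of the $D_S$. I would show that the proposed quotient ring is spanned over $A^*(X^n)$ by the corresponding monomials in the $x_S$ and that these remain independent, so that the tautological surjection from the presented ring onto $A^*(X_D^{[n]})$ is forced to be an isomorphism in each degree. Reconciling the normal-bundle computation with the sign conventions and with the substitution $-x\mapsto-\sum_{S'\supseteq S}x_{S'}$ is where the genuine technical work lies.
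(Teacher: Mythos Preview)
Your approach is correct but considerably more laborious than the paper's. The paper observes that the collection $\{D_S:|S|\ge 2\}$ is a \emph{regular} simple arrangement in $X^n$ (since $D_S\cap D_T=D_{S\cup T}$) and that all inclusions $D_S\hookrightarrow D_{S'}$ and $D_S\hookrightarrow X^n$ are Lefschetz under the standing hypothesis that $D\hookrightarrow X$ is Lefschetz and $X$ has a K\"unneth decomposition. It then simply invokes Hu's general theorem (Theorem~\ref{chowring}), which already packages the inductive blow-up computation, the tracking of Chern polynomials through strict transforms, and the completeness of the relations into a single citable statement. Your proposal effectively re-derives Hu's theorem in this particular case: the inductive blow-up with Keel's formula, the substitution $-x_S\mapsto -\sum_{S'\supseteq S}x_{S'}$ coming from Lemma~\ref{lefker}(5), and the rank-comparison argument for completeness are exactly the ingredients of the general proof. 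What the paper's route buys is brevity; what yours buys is self-containment and an explicit check that no special feature of the $D_S$-arrangement is needed beyond regularity and the Lefschetz condition.
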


\begin{Thm}
The Chow ring $A^*(X_D [n])$ is isomorphic to the polynomial
ring $ A^*(X^n) [x_S, y_I] $ modulo the ideal generated by

\begin{enumerate}
\item $y_{I} \cdot y_{J}$ for $I$ and $J$ that overlap,

\item $x_{S} \cdot x_{T}$ for $S$ and $T$ that overlap,

\item $x_{S} \cdot y_{I}$ unless $I \subset S$,

\item $\cJ_{\Delta_{I}/X^n} \cdot y_{I}$ for all $I$,

\item $\cJ_{D_{S}/X^n} \cdot x_{S}$ for all $S$,

\item $c_{a,b} ( \sum_{a, b \in I} y_I)$ for $a, b \in \{1, \cdots, n \}$ (distinct),

\item $P_{D_{S}/X^n} (-\Sigma_{S' \supset S} x_{S'})$ for
all $S$.

\end{enumerate}

\end{Thm}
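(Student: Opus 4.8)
The plan is to exploit the construction of $X_D[n]$ in \cite{KS} as a \emph{wonderful compactification}, i.e. as an iterated blow-up of $X^n$ along a building set $\cG$ whose members are the diagonals $\Delta_I$ (the loci where the points indexed by $I$ collide) and the relative strata $D_S$ (the loci where the points indexed by $S$ collide on $D$), the blow-ups being performed in order of increasing dimension. For a single blow-up the Chow ring is given by Keel's formula $A^*(\bl_Z Y) = A^*(Y)[t]/(\cJ_{Z/Y}\cdot t,\; P_{Z/Y}(t))$, where $t \mapsto -[E]$, $\cJ_{Z/Y} = \ker(A^*(Y)\to A^*(Z))$, and $P_{Z/Y}$ is a Chern polynomial of the normal bundle $N_{Z/Y}$. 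I would prove the theorem by induction along the tower of blow-ups, applying this formula at each stage and tracking how each generator and relation is transported under the next blow-up; equivalently one packages the induction into the general presentation for the Chow ring of a wonderful compactification in the sense of \cite{Li1}. Theorem 1.1 is exactly the sub-case in which only the strata $D_S$ occur, so it serves both as a template and as an ingredient.

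The key steps, in order, are as follows. First, record from \cite{KS} that $\cG = \{\Delta_I\}\cup\{D_S\}$ is a building set with clean intersections and fix an admissible ordering of the centers; this legitimizes the inductive use of Keel's formula, since each successive proper transform of a center remains nonsingular and regularly embedded. Second, determine the nested-set combinatorics of $\cG$, which dictates which products of exceptional classes vanish: $\Delta_I,\Delta_J$ are incompatible precisely when $I,J$ overlap, giving (1); $D_S,D_T$ are incompatible precisely when $S,T$ overlap, giving (2); and a mixed pair $\{\Delta_I,D_S\}$ is compatible exactly when $D_S\subseteq\Delta_I$, i.e. $I\subseteq S$ (or the two centers meet transversally), so the incompatible mixed pairs produce the cross-relations (3). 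Third, the kernel part of Keel's formula at each blow-up yields $\cJ_{\Delta_I/X^n}\cdot y_I$ and $\cJ_{D_S/X^n}\cdot x_S$, namely (4) and (5). Fourth, the Chern-polynomial part yields (6) and (7): the exceptional divisor created over a center is subsequently modified by the blow-ups of the smaller centers nested inside it, so its restriction is governed not by a single class but by the \emph{total transform} $\sum_{S'\supseteq S} x_{S'}$ (respectively $\sum_{I\ni a,b} y_I$); substituting these into the Chern polynomials $P_{D_S/X^n}$ and $c_{a,b}$ of the respective normal bundles gives (7) and (6). As in Fulton--MacPherson, the polynomial relations attached to the higher diagonals reduce, modulo (1)--(5), to the pairwise relations (6), which is why only $c_{a,b}$ for pairs is needed.

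The main obstacle is the interaction of the two families of centers, and I expect three points to require real work. First, producing a single admissible blow-up order that simultaneously resolves the diagonals and the relative strata while keeping every intermediate center nonsingular; this is where the mixed nested-set combinatorics, in particular the asymmetric compatibility "$I\subseteq S$" behind (3), is genuinely new compared with the classical configuration space and with Theorem 1.1. Second, computing the total transforms precisely, so that the arguments $-\sum_{S'\supseteq S} x_{S'}$ and $\sum_{I\ni a,b} y_I$ in (6)--(7) come out exactly right, which demands control of the excess and normal-bundle contributions each time a center is altered by a later blow-up. Third, proving \emph{completeness}: that (1)--(7) generate the entire kernel. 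Generation of $A^*(X_D[n])$ by the listed classes follows readily from the blow-up formula, so the crux is completeness; I would establish it by an inductive, degree-by-degree comparison of the two sides as modules over $A^*(X^n)$, matching ranks via the additive (motive) decomposition of each blow-up, so that a surjection between free modules of equal rank is forced to be an isomorphism.
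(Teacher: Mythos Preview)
Your overall strategy—induct along the blow-up tower using Keel's formula, tracking Lefschetz kernels and Chern polynomials—is the paper's strategy as well. The organization, however, differs in a way that matters. You propose to treat $\cG=\{\Delta_I\}\cup\{D_S\}$ as a single building set in $X^n$ and, ``equivalently'', to invoke the ready-made presentation for $Y_\cS$. That shortcut is not available: the combined family is \emph{not} a regular simple arrangement (an intersection $\Delta_I\cap D_S$ is in general neither a diagonal nor a $D_T$), so Theorem~\ref{chowring} does not apply directly. The paper instead works in two stages. First it blows up only the $D_S$'s, where the arrangement \emph{is} regular simple, obtaining $A^*(X_D^{[n]})$ at once from Theorem~\ref{chowring}; this already accounts for relations (2), (5), (7). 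Then, starting from $X_D^{[n]}$, it blows up the diagonals in the Fulton--MacPherson order and reproduces the FM induction literally through intermediate spaces $Y^{[i]}_k$ (Proposition~\ref{ChowYk}), computing at each step the Lefschetz kernel (Lemma~\ref{0th}, Proposition~\ref{kthlef}) and the Chern polynomial (Proposition~\ref{kthchern}). Relations (1), (4), (6) arise exactly as in \cite{FM}; the genuinely new relation (3) comes from showing that $x_S$ lies in the Lefschetz kernel of $\widetilde{\Delta_I}\hookrightarrow X_D^{[n]}$ whenever $I\not\subset S$ (Lemma~\ref{0th}), which in turn is read off from the proof of Proposition~\ref{deltaiso}.

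Two small corrections to your outline. The mixed compatibility is not ``$D_S\subseteq\Delta_I$'': that containment never holds for $|I|\ge 2$; the correct mechanism is the Lefschetz-kernel computation just mentioned. And your proposed rank-matching argument for completeness is unnecessary: Keel's formula (Lemma~\ref{blowchow}) already yields a \emph{presentation}, not merely generators, at every stage, so once the kernels and Chern polynomials are identified the relations (1)--(7) are automatically complete.
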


The paper is organized as follows.  In section \ref{wonderful}, we review theory of wonderful compactification and Chow rings and motives after blow-up.  In section \ref{construction}, we review the construction of compactifications of $n$ points in $X \setminus D$. In section \ref{groups-motives}, we compute Chow groups and motives explicitly.  In section \ref{rings}, we compute Chow rings under the assumptions such that $X^n$ has the Kunneth decomposition and the embedding $D \hookrightarrow X$ is a Lefshetz embedding.
   
\subsection{Notation}\label{notation}
\begin{itemize}
\item As in \cite{FM}, for
 a subset $I$ of $N:=\{1,2,...,n\}$, let \[ I^+:=I \cup \{ n+1
 \}. \]

\item Let $Y_1$ be the blowup of a nonsingular complex variety $Y_0$
along a nonsingular closed subvariety $Z$. If $V$ is an
irreducible subvariety of $Y_0$, we will use $\widetilde{V}$ or
$V(Y_1)$  to denote

\begin{itemize}
\item the total transform of $V$, if $V \subset Z$;

\item the proper transform of $V$, otherwise.
\end{itemize}

If there is no risk to cause confusion, we will use simply $V$ to
denote $\widetilde{V}$. The space $\mathrm{Bl}_{\widetilde{V}}Y_1$
will be called the iterated blowup of $Y_0$ along centers $Z, V$
(with the order).

\item For a partition of $I$ of $N$, $\Delta _I$ denotes the
polydiagonal associated to $I$. And consider the binary operation
$I\wedge J$ on the set of all partitions satisfying \[ \Delta
_I\cap \Delta _J = \Delta _{I\wedge J}. \] We use $\Delta _{I_0}$ instead of $\Delta _I$ when $I=\{
I_0, I_1,...,I_l \}$ such that $|I_i|=1$ for all $i\ge 1$.

\end{itemize}

\subsection{Acknowledgements} The author thanks Bumsig Lim, Li Li, Philipo Viviani, and Stephanie Yang for useful discussions.  Most of the work took place at Mittag-Leffler Institute, Sweden while he was attending the program "Moduli Spaces" and the author thanks for its hospitality. 
   
\section{Wonderful Compactification of Arrangements of
Subvarieties}
\label{wonderful}

In this section, we review the theory of wonderful
compactification of arrangements of subvarieties.  See the detail
and proofs in \cite{Li1}, \cite{Li2}.

\subsection{Arrangement, building set and nest}

\begin{Def} [of clean intersection]
Let $Y$ be a nonsingular algebraic variety and let $U$ and $V$ be
two smooth subvarieties of $Y$.

$U$ and $V$ intersect cleanly if $U \neq V $ and their
scheme-theoretic intersection is nonsingular and the tangent
bundles satisfy $T(U \cap V) = TU \cap TV$.
\end{Def}

\begin{Rmk}
If the intersection is transversal, then it is a clean
intersection.
\end{Rmk}

\begin{Def}[of arrangement]
A simple arrangement of subvarieties of $Y$ is a finite set $\cS= \{ S_i
\}$ of nonsingular closed irreducible subvarieties of $Y$
satisfying the following conditions

\begin{enumerate}
\item $S_i$ and $S_j$ intersect cleanly,

\item $S_i \cap S_j$ is either empty or some
$S_k$'s.
\end{enumerate}

\end{Def}

\begin{Def}[of building set]
Let $\cS$ be an arrangement of subvarieties of $Y$.  A subset $\cG
\subset \cS$ is called a building set with respect to $\cS$, if ,
for any $S \in \cS$, the minimal elements in $\cG$ which contain
$S$ intersect transversally and their intersection is $S$.  These
minimal elements are called the $\cG$-factors of $S$.
\end{Def}

\begin{Def}[of $\cG$-nest]
A subset $\cT \subset \cG$ is called a $\cG$-nest if there is a
flag of elements in $\cS$; $S_1 \subset S_2 \subset \cdots \subset
S_k$ such that
$$ \cT = \cup_{i=1}^{k} \{ A: \text{$A$ is a $\cG$-factor of $S_i$ } \}.$$
\end{Def}

\subsection{Construction of $Y_{\cG}$ by a sequence of blow-ups}

Let $Y$ be a nonsingular algebraic variety, $\cS$ be a simple
arrangement of subvarieties and $\cG$ be a building set with
respect to $\cS$. Order $\cG= \{ G_1, \cdots, G_N\}$ such that $i<
j$ if $G_i \subset G_j$.

We define $(Y_k, \cS^{(k)}, \cG^{(k)})$ inductively, where $Y_k$
is a blow-up of $Y_{k-1}$ along a nonsingular variety, $\cS^{(k)}$
is a simple arrangement of subvarieties of $Y_k$ and $\cG^{(k)}$
is a building set with respect to $\cS^{(k)}$.

\begin{Def/Thm}
\label{thm:one-blow} Assume $\cS$ is a simple arrangement of
subvarieties of $Y$ and $\cG$ is a building set.  Let $G$ be a
minimal element in $\cG$ and consider $\pi:
\widetilde{Y}:=\mathrm{Bl}_{G}{Y} \rightarrow Y$. Denote the
exceptional divisor by $E$. For any nonsingular variety $V$ in
$Y$, we define $\widetilde{V} \subset \mathrm{Bl}_{G}{Y}$, the
$\sim$ transform of $V$, to be the proper transform of $V$ if $V
\nsubseteq G$, and to be $\pi^{-1}(V)$ if $V \subset G$.

For simplicity of notation, for a sequence of blow-ups, we use the
same notation $\widetilde{V}$ to denote the iterated one.

\begin{enumerate}
\item The collection $\cS'$ of subvarieties in $\widetilde{Y}$
defined by
$$\cS' := \{ \widetilde{S} \}_{S \in \cS} \cup \{ \widetilde{S} \cap
E \}_{\emptyset \varsubsetneq S \cap G \varsubsetneq S}$$ is a
simple arrangement in $\tilde{Y}$

\item $\cG':= \{ \widetilde{G_i} \}_{G_i \in \cG}$ is a building
set with respect to $\cS'$.

\item Given a subset $\cT$ of $\cG$.  Define $\cT' := \{
\widetilde{A} \}_{A \in \cT} \subset \cG'$.  $\cT $ is a
$\cG$-nest if and only if $\cT'$ is a $\cG'$-nest.
\end{enumerate}
\end{Def/Thm}

Let's go back to the construction of $Y_\cG$.
\begin{enumerate}
\item For $k=0$, $Y_0=Y, \cS^{(0)}= \cS, \cG^{(0)}=\cG= \{G_1,
\cdots, G_N \}, G_i^{(0)}=G_i $.

\item Assume $Y_{k-1}$ is already constructed.  Let $Y_k$ be the
blow-up of $Y_{k-1}$ along the nonsingular subvariety
$G_k^{(k-1)}$. Define $G_{i}^{(k)} := \widetilde{ G_{i}^{(k-1)}
}$. Since $G_{i}^{(k-1)}$ for $i<k$ are all divisors,
$G_{k}^{(k-1)}$ is minimal in $\cG^{(k-1)}$.  Thus there is a
naturally induced arrangement $\cS^{(k)}$ and a building set
$\cG^{(k)}$ by the theorem \ref{thm:one-blow}.

\item Continue the inductive construction to $k=N$, where all
elements in the building set $\cG^{(N)}$ are divisors.
\end{enumerate}

\begin{Thm}
\label{thm:existance}
Denote $Y^{\circ}=Y \setminus \cup_{G \in
\cG} G$. There is a natural locally closed embedding
$$  Y^{\circ} \hookrightarrow Y \times \prod_{G \in \cG} \mathrm{Bl}_{G}Y,$$
and its closure is denoted by $Y_{\cG}$ and called the wonderful
compactification of $\cG$.  Then $Y_{\cG}$ is isomorphic to $Y_N$
which is constructed in the above.

The variety $Y_{\cG}$ is nonsingular.  For each $G \in \cG$, there
is a nonsingular divisors $D_G \subset Y_{\cG}$ such that

\begin{enumerate}
\item The union of these divisors is $Y_{\cG} \setminus
Y^{\circ}.$

\item Any set of these divisors meets transversally.  An
intersection of divisors $D_{T_1} \cap \cdots D_{T_l}$ is not
empty exactly when $\{ T_1, \cdots T_l \}$ form a $\cG$-nest.
\end{enumerate}
\end{Thm}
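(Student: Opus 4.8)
The plan is to prove Theorem~\ref{thm:existance} by induction on the number $N$ of elements in the building set $\cG$, exploiting the sequence-of-blow-ups construction $Y_0 \to Y_1 \to \cdots \to Y_N = Y_\cG$ together with the structural Definition/Theorem~\ref{thm:one-blow}. The two parts of the statement are essentially independent in character: first one must identify the abstract iterated blow-up $Y_N$ with the \emph{closure} $Y_\cG$ of the locally closed embedding of $Y^\circ$, and second one must establish the transversality and nest-combinatorics properties of the boundary divisors. I would handle the embedding/closure identification first, since the divisor statements are most naturally phrased on $Y_N$.

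\medskip
\noindent\textbf{Identifying $Y_N$ with the closure.}
First I would observe that each blow-up map $Y_k \to Y_{k-1}$ is a morphism over $Y$, and that for each $G \in \cG$ the composite $Y_N \to Y \to \mathrm{Bl}_G Y$ (defined wherever the center has already been blown up, and extended by the universal property of blow-up) assembles into a morphism $Y_N \to Y \times \prod_{G} \mathrm{Bl}_G Y$. The key point is that $Y^\circ$, the open locus away from all centers, is an isomorphic image under both constructions, since blowing up along a center disjoint from $Y^\circ$ does nothing there; hence $Y^\circ$ embeds as a dense open in $Y_N$ and maps to the embedded $Y^\circ$ in the product. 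Because $Y_N$ is the result of successive blow-ups along subvarieties supported over $\cup_G G$, it is irreducible with $Y^\circ$ dense, so its image is exactly the closure $Y_\cG$. To upgrade the dominant map $Y_N \to Y_\cG$ to an isomorphism, I would check it is birational and that $Y_\cG$ is normal (or invoke the explicit local charts of the iterated blow-up against the Proj description of the closure); then Zariski's Main Theorem, or the properness of $Y_N$ over $Y$ against separatedness of the closure, forces an isomorphism. Nonsingularity of $Y_\cG \cong Y_N$ is immediate, as each $Y_k$ is the blow-up of a nonsingular variety along a nonsingular center, which by Definition/Theorem~\ref{thm:one-blow}(2) is always $\cG^{(k-1)}$-minimal and hence smooth.

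\medskip
\noindent\textbf{The boundary divisors and their intersections.}
For each $G \in \cG$ define $D_G$ to be the iterated $\sim$-transform in $Y_N$ of the exceptional divisor created when $G^{(k-1)}$ was blown up at the $k$-th step. That the union of the $D_G$ equals the complement $Y_\cG \setminus Y^\circ$ follows because a point lies in $Y^\circ$ precisely when it avoids every center, i.e.\ lies off every exceptional divisor and its transforms. For the transversality and nonemptiness-iff-nest statement I would again induct using Definition/Theorem~\ref{thm:one-blow}. At each blow-up, part~(1) controls how the arrangement $\cS^{(k)}$ is generated by $\sim$-transforms together with the new intersections with $E$, part~(2) guarantees $\cG^{(k)}$ remains a building set of divisors (after the final step), and crucially part~(3) states that $\cT$ is a $\cG$-nest if and only if $\cT'$ is a $\cG'$-nest, so the nest property is preserved throughout the tower. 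The divisors $\{D_{T_i}\}$ become, in $Y_N$, a collection of smooth divisors whose pairwise and higher intersections are transversal by the local normal-crossing structure created at each blow-up; the combinatorial content is that an intersection $D_{T_1} \cap \cdots \cap D_{T_l}$ survives to be nonempty exactly when the centers $T_1, \ldots, T_l$ were arranged in a compatible flag, which is precisely the $\cG$-nest condition via part~(3).

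\medskip
\noindent\textbf{The main obstacle.}
I expect the hardest part to be the iff-direction of the nest/nonemptiness claim, specifically showing that a non-nest collection yields an \emph{empty} intersection. The easy direction—that a nest gives a nonempty transversal intersection—follows by tracking a flag through the blow-ups, but ruling out nonemptiness for non-nests requires a careful local analysis near the centers, verifying that two boundary divisors whose defining elements of $\cG$ are neither nested nor disjoint have already been separated by an earlier blow-up. Here the building-set hypothesis (that the minimal elements of $\cG$ containing a given $S$ meet transversally with intersection exactly $S$) is exactly what prevents unwanted intersections, and the bookkeeping of reconciling the inductive $\sim$-transforms with the combinatorics of $\cG$-factors is the delicate step. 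For this I would lean on the local charts provided by \cite{Li1}, \cite{Li2} and reduce to the transversal model case, rather than reprove the wonderful-compactification machinery from scratch.
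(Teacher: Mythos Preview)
The paper does not actually prove this theorem. Section~\ref{wonderful} is explicitly a review section: it opens with ``In this section, we review the theory of wonderful compactification of arrangements of subvarieties. See the detail and proofs in \cite{Li1}, \cite{Li2},'' and Theorem~\ref{thm:existance} is stated without any argument, as a quotation of Li's results. So there is no in-paper proof to compare your proposal against.

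That said, your sketch is broadly in line with the approach in \cite{Li1}: one builds $Y_N$ by the sequence of blow-ups governed by Definition/Theorem~\ref{thm:one-blow}, identifies it with the closure in the product, and then reads off the boundary structure by induction. Two comments on your outline. First, in the closure-identification step, appealing to normality of $Y_\cG$ or to Zariski's Main Theorem is circular unless you have an independent handle on $Y_\cG$; the cleaner route (and the one Li takes) is to produce the morphism $Y_N \to Y \times \prod_G \mathrm{Bl}_G Y$ directly from the universal property of each factor blow-up, check it is a closed immersion by a local computation, and then conclude that the image is the closure of $Y^\circ$ because $Y_N$ is proper over $Y$ and $Y^\circ$ is dense in $Y_N$. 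Second, you have correctly located the genuine content in the ``non-nest $\Rightarrow$ empty intersection'' direction; your final paragraph already concedes that you would import this from \cite{Li1}, \cite{Li2}, which is exactly what the present paper does as well.
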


\begin{Thm}[order of blow-ups]
\label{thm:order}
\begin{enumerate}
\item Let $\cI_i$ be the ideal sheaf of $G_i \in \cG.$  Then
$$ Y_{\cG} \cong \mathrm{Bl}_{\cI_1 \cdots \cI_N} Y. $$

\item If we arrange $\cG = \{ G_1, \cdots G_N \}$ in such an order
that
$$ \text{ $(*)$ for any $1 \leq i \leq N$, the first $i$ terms $G_1,
\cdots G_i$ form a building set}$$

Then

$$ Y_{\cG} \cong \mathrm{Bl}_{\widetilde{G_N}} \cdots \mathrm{Bl}_{\widetilde{G_2}}
\mathrm{Bl}_{G_1}Y,$$

where each blow-up is along a smooth subvariety.
\end{enumerate}

\end{Thm}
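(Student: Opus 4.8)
The plan is to prove the two parts in sequence, deriving the ordered blow-up statement (2) from the product-of-ideals statement (1) together with Theorem \ref{thm:existance}.

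For part (1), I would start from the description of $Y_\cG$ in Theorem \ref{thm:existance} as the closure of $Y^{\circ}$ in $Y\times\prod_{G\in\cG}\mathrm{Bl}_G Y$. The key algebraic input is the standard fact that the blow-up of a product of ideal sheaves is the ``join'' of the individual blow-ups: $\mathrm{Bl}_{\cI_1\cdots\cI_N}Y$ is the closure of the graph of the rational map $Y\dashrightarrow\prod_{i}\mathrm{Bl}_{G_i}Y$ (fiber product over $Y$) determined by the blow-down maps. Over $Y^{\circ}$ every $\cI_i$ is invertible, so this rational map restricts to a locally closed embedding of $Y^{\circ}$, and its graph closure is visibly the same closure that defines $Y_\cG$. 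Since both spaces are integral, dominate $Y$, and agree over the dense open $Y^{\circ}$, matching the scheme structures gives $Y_\cG\cong\mathrm{Bl}_{\cI_1\cdots\cI_N}Y$.

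For part (2), I would induct on $N$, the base case $N=1$ being immediate. For the inductive step, condition $(*)$ ensures that the prefix $\cG^{-}:=\{G_1,\ldots,G_{N-1}\}$ is itself a building set whose induced order again satisfies $(*)$; by the inductive hypothesis together with part (1) applied to $\cG^{-}$, the space $W:=\mathrm{Bl}_{\widetilde{G_{N-1}}}\cdots\mathrm{Bl}_{G_1}Y$ equals $Y_{\cG^{-}}\cong\mathrm{Bl}_{\cI_1\cdots\cI_{N-1}}Y$. It then remains to show $\mathrm{Bl}_{\widetilde{G_N}}W\cong Y_\cG$. Writing $\pi\colon W\to Y$, I would invoke the standard identity $\mathrm{Bl}_{\cI_N\mathcal{O}_W}W\cong\mathrm{Bl}_{(\cI_1\cdots\cI_{N-1})\cdot\cI_N}Y=\mathrm{Bl}_{\cI_1\cdots\cI_N}Y$, which is $Y_\cG$ by part (1); so the proof reduces to identifying $\mathrm{Bl}_{\cI_N\mathcal{O}_W}W$ with $\mathrm{Bl}_{\widetilde{G_N}}W$.

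The main obstacle is precisely this last identification, namely showing that blowing up the inverse-image ideal sheaf $\cI_N\mathcal{O}_W$ agrees with blowing up the strict transform $\widetilde{G_N}$, and that $\widetilde{G_N}$ is nonsingular. Here condition $(*)$ at level $N$ (that $\cG$ itself is a building set) is essential: via Definition/Theorem \ref{thm:one-blow} the clean-intersection and transversality structure is preserved along the tower $W\to Y$, so $\widetilde{G_N}$ is smooth and, locally on $W$, the inverse-image ideal factors as $\cI_N\mathcal{O}_W=\cI_{\widetilde{G_N}}\cdot\prod_j\mathcal{O}_W(-a_jE_j)$ with the exceptional correction invertible. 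Since multiplying the center by an invertible ideal does not change the blow-up, $\mathrm{Bl}_{\cI_N\mathcal{O}_W}W=\mathrm{Bl}_{\widetilde{G_N}}W$, completing the induction and simultaneously yielding the final assertion that every center in the tower is smooth. I expect the bookkeeping of the multiplicities $a_j$ and the verification of transversality of $\widetilde{G_N}$ against the accumulated exceptional divisors $E_j$ (indexed by the $\cG^{-}$-nests through $G_N$) to be the most delicate point.
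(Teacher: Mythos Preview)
The paper does not actually prove this theorem. Section~\ref{wonderful} is explicitly a review section, and its opening sentence refers the reader to \cite{Li1}, \cite{Li2} for ``the detail and proofs''; Theorem~\ref{thm:order} is stated there without argument. So there is no in-paper proof to compare against.

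Your outline is essentially the approach taken in \cite{Li1}. Part~(1) via the closure-of-graph description of $\mathrm{Bl}_{\cI_1\cdots\cI_N}Y$ and the definition of $Y_{\cG}$ in Theorem~\ref{thm:existance} is the standard route. For part~(2), reducing to the identification $\mathrm{Bl}_{\cI_N\mathcal{O}_W}W\cong\mathrm{Bl}_{\widetilde{G_N}}W$ and then to the factorization $\cI_N\mathcal{O}_W=\cI_{\widetilde{G_N}}\cdot(\text{invertible})$ is exactly the right move.

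One caveat on the final step: your citation of Definition/Theorem~\ref{thm:one-blow} to control $\widetilde{G_N}$ along the tower is not literally valid, since that result only handles blowing up a \emph{minimal} element of the current building set, whereas condition~$(*)$ permits non-minimal centers. The fix is implicit in your induction: having identified $W$ with $Y_{\cG^{-}}$ (a statement independent of the blow-up order), you may now realize $W$ via the minimal-first sequence of Theorem~\ref{thm:existance}, along which Definition/Theorem~\ref{thm:one-blow} \emph{does} apply at each stage. One then still has to verify that $G_N$ sits in the arrangement governed by $\cG^{-}$ so that its transforms stay smooth and meet the exceptional divisors cleanly; this is exactly where the hypothesis that the full $\cG$ (not merely $\cG^{-}$) is a building set is used, and it is indeed the ``most delicate point'' you anticipated.
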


\subsection{Chow  group and motive of $Y_{\cG}$ }

 Let $Y_0 :=Y, Y_0 \cT :=\cap_{T \in \cT} T$ where $\cT$
is a $\cG$-nest. Define $r_{\cT}(G) := \text{dim} (\cap_{G
\subsetneq T \in \cT} T) - \text{dim} G$ (here we use a convention
that $\cap_{G \subsetneq T \in \cT} T =Y$  if no $T $ strictly
contains $G$). Then define
$$ M_{\cT} := \{ \overrightarrow{\mu}= \{ \mu_G \}_{G \in \cT} :1
\leq \mu_G \leq r_{\cT} (G) -1 \}$$ and let $\|
\overrightarrow{\mu} \| := \sum_{G \in \cG} \mu_G$ for
$\overrightarrow{\mu} \in M_{\cT}$.

\begin{Thm}
\label{ChowY}
We have the Chow group decomposition
$$ A^*( Y_{\cG}) = A^*(Y) \oplus \bigoplus_{\cT} \bigoplus_{  \overrightarrow{\mu} \in M_{\cT}
} A^{*- \| \overrightarrow{\mu} \| } (Y_0 \cT)$$ where $\cT$ runs
through all $\cG$-nests.

If $Y$ is complete, we also have the Chow motive decomposition
$$ h( Y_{\cG}) = h(Y) \oplus \bigoplus_{\cT} \bigoplus_{\overrightarrow{\mu} \in M_{\cT}
}  h(Y_0 \cT)( \| \overrightarrow{\mu} \|)$$ where $\cT$ runs
through all $\cG$-nests.

\end{Thm}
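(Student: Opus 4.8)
The plan is to reduce everything to a repeated application of the classical blow-up formula, following the iterated blow-up description of $Y_{\cG}$ in Theorem~\ref{thm:order}(2). Recall that for a blow-up $\pi\colon \mathrm{Bl}_Z W \to W$ of a nonsingular variety $W$ along a nonsingular center $Z$ of codimension $c$ one has $A^*(\mathrm{Bl}_Z W) \cong A^*(W) \oplus \bigoplus_{j=1}^{c-1} A^{*-j}(Z)$, and, when $W$ is complete, the motivic identity $h(\mathrm{Bl}_Z W) \cong h(W) \oplus \bigoplus_{j=1}^{c-1} h(Z)(j)$ of Manin. By Theorem~\ref{thm:order}(2) we may order $\cG = \{G_1,\dots,G_N\}$ so that each initial segment $\{G_1,\dots,G_k\}$ is again a building set and $Y_k = \mathrm{Bl}_{\widetilde{G_k}}Y_{k-1}$ is a blow-up along a nonsingular center. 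I would therefore argue by induction on $N=|\cG|$, the base case $N=0$ being $Y_{\cG}=Y$ with only the empty nest contributing.

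For the inductive step I would peel off the last (maximal) center. Writing $\cG' = \{G_1,\dots,G_{N-1}\}$, which is a building set by condition $(*)$, we have $Y_{N-1}=Y_{\cG'}$ and $Y_{\cG} = \mathrm{Bl}_{\widetilde{G_N}}Y_{\cG'}$, so the blow-up formula gives $A^*(Y_{\cG}) \cong A^*(Y_{\cG'}) \oplus \bigoplus_{j=1}^{c_N-1} A^{*-j}(\widetilde{G_N})$, where $c_N = \codim_Y G_N$ (the codimension is preserved since $\widetilde{G_N}$ is a proper transform at the stage it is blown up). The induction hypothesis applied to $Y_{\cG'}$ contributes exactly the summands indexed by $\cG$-nests not containing $G_N$, once one identifies $\cG'$-nests with the $\cG$-nests that avoid the maximal element $G_N$. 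The remaining task is to show that the second summand $\bigoplus_{j=1}^{c_N-1}A^{*-j}(\widetilde{G_N})$ reassembles into precisely the summands indexed by $\cG$-nests that contain $G_N$.

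The key structural input is that the center $\widetilde{G_N}$ is itself the wonderful compactification of $G_N$ with respect to the arrangement induced on $G_N$, so that the theorem applies to it by induction. Decomposing $A^*(\widetilde{G_N})$ in this way yields summands indexed by nests of the induced arrangement, and by the nest-correspondence principle underlying Def/Thm~\ref{thm:one-blow}(3) together with the intersection description in Theorem~\ref{thm:existance}(2) these are in bijection with the $\cG$-nests $\cT$ containing $G_N$. Under this bijection one sets $\mu_{G_N}=j$; since $G_N$ is maximal in any such $\cT$, the convention $\cap_{G_N \subsetneq T\in\cT}T = Y$ gives $r_{\cT}(G_N)=c_N$, so the blow-up range $1\le j\le c_N-1$ is exactly $1\le \mu_{G_N}\le r_{\cT}(G_N)-1$. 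One must then check that $Y_0\cT$ equals the intersection produced by the induced nest inside $G_N$, and, crucially, that for every remaining element of $\cT$ the invariant $r_{\cT}(G)$ computed in $Y$ is faithfully reproduced by the codimension data of the induced arrangement. I expect this last point --- verifying that the numbers $r_{\cT}(G)$, and hence the index sets $M_{\cT}$, are correctly recovered, including the delicate case of elements of $\cT$ incomparable to $G_N$ (which survive in the induced arrangement as restrictions) --- to be the main obstacle; it is here that the clean-intersection and transversality hypotheses enter.

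Finally, the motive statement follows by running the identical induction with the motivic blow-up formula in place of the Chow-group version: completeness of $Y$ (hence of each $Y_k$ and each $Y_0\cT$) guarantees that the Tate twists $h(Y_0\cT)(\|\overrightarrow{\mu}\|)$ are defined, and the additive twist $\|\overrightarrow{\mu}\| = \sum_G \mu_G$ is produced by accumulating one twist by $j=\mu_{G_N}$ at each peeling step. The bookkeeping of twists is then automatic once the index set of pairs $(\cT,\overrightarrow{\mu})$ has been matched as above, so the essential work of the proof is concentrated entirely in the combinatorial identification of nests and the codimension count $r_{\cT}(G)$.
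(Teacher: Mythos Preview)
The paper does not supply its own proof of Theorem~\ref{ChowY}: this result is quoted as part of the review in Section~\ref{wonderful}, with the reader referred to \cite{Li1}, \cite{Li2} for the details. So there is no in-paper argument to compare against.

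That said, your sketch is essentially the argument given in \cite{Li2}: one inducts on $|\cG|$ via the ordering of Theorem~\ref{thm:order}(2), peels off a maximal element $G_N$, applies the blow-up formula of Manin, and identifies the center $\widetilde{G_N}\subset Y_{\cG'}$ with a wonderful compactification of $G_N$ for the induced building set, so that the inductive hypothesis applies to it as well. The one point you flag as delicate is indeed where the work lies: showing that $\widetilde{G_N}$ really is $(G_N)_{\cG|_{G_N}}$ for a suitable restricted building set, and that the bijection between $\cG'$-nests together with nests of the restricted arrangement on $G_N$ and $\cG$-nests (split according to whether $G_N\in\cT$) preserves both the intersections $Y_0\cT$ and the codimension numbers $r_{\cT}(G)$. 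Li establishes exactly this compatibility; your outline is correct but would need those lemmas to be complete.
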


\subsection{Chow ring of $Y_{\cS}$}

In this section, we will review the result of Chow rings after blow-up \cite{Keel} and the result of Hu \cite{Hu}
concerning the Chow ring of $Y_{\cS}$.

\begin{Def}[of Lefschetz embedding]
An embedding $U \hookrightarrow Y$ is called a Lefshetz embedding
if the restriction map $A^* (Y) \rightarrow A^*(U)$ is surjective.
Under this situation, let $\cJ_{U/Y}$ be the kernel of $A^* (Y)
\rightarrow A^*(U)$ and let $P_{U/Y}$ be the Chern polynomial for
the normal bundle $N_{U/Y}$.
\end{Def}

\begin{Def}
A Chern polynomial $P_{U/Y}(t)$ for a Lefshetz embedding $U \hookrightarrow Y$ is a polynomial
$$P_{U/Y}(t) = t^d +a_1 t^{d-1}+ \cdots a_{d-1}t + a_d \in A^*(Y)[t],$$ 
where $d$ is the codimension of $U$ in Y and $a_i \in A^i(Y)$ is a class whose restriction in $A^i(U)$ is the Chern class $c_i(N_{U/Y})$.
\end{Def}

\begin{Lemma}
\begin{enumerate}

\item If $D$ is a divisor, then $P_{D/Y}(t) = t+D$,

\item If $V_1, V_2 \subset Y$ are subvarieties meeting transversally and their intersection is $Z$, then 
$$ P_{Z/Y}(t) = P_{V_1/Y}(t) \cdot P_{V_2/Y}(t)$$

\end{enumerate}
\end{Lemma}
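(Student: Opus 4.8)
The plan is to treat the two parts separately: part (1) is essentially a special case that also fixes the intuition, and part (2) rests on the behaviour of normal bundles under a transverse intersection together with the Whitney sum formula.

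For part (1), I would recall that the normal bundle of a divisor $D \subset Y$ is the line bundle $\mathcal{O}_Y(D)|_D$, so the codimension is $d = 1$ and the only Chern class is $c_1(N_{D/Y}) = [D]|_D \in A^1(D)$. Since the class $[D] \in A^1(Y)$ restricts precisely to $[D]|_D$, it is a legitimate choice of the lift $a_1$ in the definition of a Chern polynomial, and therefore $t + D$ is a Chern polynomial for $D \hookrightarrow Y$. This is immediate once the identification of the normal bundle is in hand.

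For part (2), the key geometric input is the decomposition of the normal bundle along a transverse intersection. Writing $Z = V_1 \cap V_2$, transversality yields the short exact sequence
$$ 0 \to N_{Z/V_1} \to N_{Z/Y} \to N_{V_1/Y}|_Z \to 0,$$
together with the identification $N_{Z/V_1} \cong N_{V_2/Y}|_Z$, coming from the fact that along $Z$ the normal directions to $Z$ inside $V_1$ are exactly the normal directions to $V_2$ inside $Y$. By the Whitney sum formula this gives
$$ c(N_{Z/Y}) = c(N_{V_1/Y}|_Z)\cdot c(N_{V_2/Y}|_Z)$$
in $A^*(Z)$. Transversality also gives $\codim(Z/Y) = \codim(V_1/Y) + \codim(V_2/Y)$, so the product $P_{V_1/Y}(t)\cdot P_{V_2/Y}(t)$ is monic of the correct degree $d$. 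It then remains to check that its coefficients are admissible lifts: writing $P_{V_1/Y}(t) = \sum_i a_i t^{d_1 - i}$ and $P_{V_2/Y}(t) = \sum_j b_j t^{d_2 - j}$ with $a_0 = b_0 = 1$, the coefficient of $t^{d-k}$ in the product is $\sum_{i+j=k} a_i b_j$. Because the restriction $A^*(Y) \to A^*(Z)$ is a ring homomorphism factoring as $A^*(Y) \to A^*(V_1) \to A^*(Z)$, one has $a_i|_Z = c_i(N_{V_1/Y})|_Z = c_i(N_{V_1/Y}|_Z)$ and likewise $b_j|_Z = c_j(N_{V_2/Y}|_Z)$; convolving these and invoking the Whitney identity above shows that $\sum_{i+j=k} a_i b_j$ restricts to $c_k(N_{Z/Y})$. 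Hence the product is itself a Chern polynomial for $Z \hookrightarrow Y$, which is exactly what the stated equality asserts (Chern polynomials being defined only up to a choice of lifts, the content is that the product is such an admissible choice).

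The main obstacle I anticipate is not the Whitney bookkeeping but extracting the normal-bundle identification $N_{Z/V_1} \cong N_{V_2/Y}|_Z$ cleanly from the transversality hypothesis, and ensuring that $Z \hookrightarrow Y$ is itself a Lefschetz embedding so that $P_{Z/Y}$ is defined at all. The product construction is convenient here precisely because it exhibits explicit lifts in $A^*(Y)$ of every Chern class $c_k(N_{Z/Y})$, which is the surjectivity needed for the coefficients; the full Lefschetz property is then supplied by the ambient geometric setup in which the lemma is applied.
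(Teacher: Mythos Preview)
The paper does not supply a proof of this lemma: it is stated as part of the review of Keel's blow-up formula and Hu's results, with no argument given. Your proposal is therefore not comparable to a proof in the paper, but it is the standard and correct verification. Part~(1) is exactly as you say; for part~(2) the short exact sequence of normal bundles along a transverse intersection together with the Whitney formula is the expected route, and your coefficient check that the product polynomial has admissible lifts is what is needed. Your closing caveat about $Z\hookrightarrow Y$ being Lefschetz is well placed: the lemma is only meaningful when all three Chern polynomials are defined, and in the paper this is always ensured by the ambient hypotheses on $X^n$, $D_S$ and $\Delta_I$.
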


\begin{Lemma}\label{lefker}
Let $U$ and $V$ are non-singular closed subvarieties of $Y$ meeting cleanly in a non-singular closed subvariety $Z$.  We also assume that both embeddings $U \hookrightarrow Y$ and $V \hookrightarrow Y$ are Lefschetz.  Then all the relevant inclusions below are Lefschetz and 

\begin{enumerate}
\item $\cJ_{\mathrm{Bl}_Z U/ \mathrm{Bl}_V Y} = \cJ_{U/Y}$ if 
$Z \neq \emptyset$,

\item $\cJ_{\mathrm{Bl}_Z U/ \mathrm{Bl}_V Y}= (\cJ_{U/Y}, \widetilde{V}) $ if $Z = \emptyset$, where $\widetilde{V}$ is the exceptional divisor in $\mathrm{Bl}_V Y$,

\item $\cJ_{\mathrm{Bl}_Z U/ \mathrm{Bl}_Z Y} = (\cJ_{U/Y}, [ \mathrm{Bl}_Z V ])$ if $Z \neq \emptyset$,

\item $P_{\mathrm{Bl}_Z U/ \mathrm{Bl}_V Y}(t) = P_{U/Y}(t)$,

\item $P_{\mathrm{Bl}_Z U/ \mathrm{Bl}_Z Y}(t) = P_{U/Y}(t- \widetilde{Z})$ where $\widetilde{Z}$ is the exceptional divisor in $\mathrm{Bl}_Z Y$.
\end{enumerate}  
\end{Lemma}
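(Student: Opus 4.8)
The plan is to deduce all five identities from Keel's presentation of the Chow ring of a blow-up, applied to the four embeddings in play, together with the elementary geometry of proper transforms under a clean intersection. Recall Keel's theorem: for a Lefschetz embedding $W\hookrightarrow X$ one has $A^*(\mathrm{Bl}_W X)=A^*(X)[E]/\bigl(\cJ_{W/X}\cdot E,\;P_{W/X}(-E)\bigr)$ with $E$ the class of the exceptional divisor. First I would assemble the geometric input. Because $U$ and $V$ meet cleanly along $Z$, the proper transform of $U$ in $\mathrm{Bl}_V Y$ is canonically $\mathrm{Bl}_Z U$, and the proper transform of $U$ in $\mathrm{Bl}_Z Y$ is again $\mathrm{Bl}_Z U$ (here $Z\subset U$); moreover the ambient exceptional divisor restricts to the exceptional divisor $E_U$ of $\mathrm{Bl}_Z U$, i.e. $E|_{\mathrm{Bl}_Z U}=E_U$ in the first case and $\widetilde Z|_{\mathrm{Bl}_Z U}=E_U$ in the second; when $Z=\emptyset$ the proper transform is $U$ itself and $E|_U=0$. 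Writing $\pi$ for the ambient blow-down and $\sigma:\mathrm{Bl}_Z U\to U$ for its restriction, the restriction map on Chow rings becomes, in Keel coordinates, $a\mapsto \sigma^*(a|_U)$ on $A^*(Y)$ and $E\mapsto E_U$. Since $A^*(Y)\to A^*(U)$ is surjective by hypothesis and $E\mapsto E_U$ hits the only new generator, the restriction map is surjective; hence every embedding occurring is again Lefschetz, as asserted.

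For the two Chern-polynomial statements I would compute the normal bundle of the proper transform by a local coordinate calculation. When the transverse center $V$ is blown up, the equations cutting out $U$ are untouched, so $N_{\mathrm{Bl}_Z U/\mathrm{Bl}_V Y}=\sigma^*N_{U/Y}$ with no twist; lifting the coefficients $a_i$ of $P_{U/Y}$ by $\pi^*$ produces a Chern polynomial for the new embedding equal to $P_{U/Y}(t)$, which is (4). When instead $Z\subset U$ is blown up, the same computation gives $\sigma^*N^*_{U/Y}\cong N^*_{\mathrm{Bl}_Z U/\mathrm{Bl}_Z Y}\otimes\mathcal O(-E_U)$, so the Chern roots of the normal bundle are shifted by $-\widetilde Z$; therefore $P_{\mathrm{Bl}_Z U/\mathrm{Bl}_Z Y}(t)=P_{U/Y}(t-\widetilde Z)$, which is (5). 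The only point to verify is that $\pi^*a_i$ and the powers of $\widetilde Z$ restrict to the correct Chern classes on $Z$, and this is immediate from $(a_i|_U)|_Z=a_i|_Z$ together with the clean-intersection identification $N_{Z/U}\cong N_{V/Y}|_Z$.

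For the kernels I would compare the two Keel presentations after killing the proposed generators. Quotienting $A^*(\mathrm{Bl}_V Y)$ by $\pi^*\cJ_{U/Y}$ replaces the base $A^*(Y)$ by $A^*(U)$ and leaves $A^*(U)[E]/\bigl(\overline{\cJ_{V/Y}}\cdot E,\;\overline{P_{V/Y}}(-E)\bigr)$, where a bar denotes restriction of coefficients to $U$. Using the clean-intersection identities $\overline{\cJ_{V/Y}}=\cJ_{Z/U}$ and $\overline{P_{V/Y}}=P_{Z/U}$ (both valid because $N_{Z/U}\cong N_{V/Y}|_Z$, so the two sides restrict to the same Chern data on $Z$), this ring is exactly $A^*(\mathrm{Bl}_Z U)$ under $E\leftrightarrow E_U$; hence the kernel is generated by $\cJ_{U/Y}$ alone, which is (1). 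When $Z=\emptyset$ the module decomposition $A^*(\mathrm{Bl}_V Y)=\pi^*A^*(Y)\oplus(E)$ and the facts $a\mapsto a|_U$, $E\mapsto0$ show at once that the kernel is generated by $\cJ_{U/Y}$ and by $\widetilde V=E$, which is (2). For (3) the decisive geometric fact is that blowing up the intersection $Z$ separates the two proper transforms, so $\mathrm{Bl}_Z U$ and $\mathrm{Bl}_Z V$ are disjoint and $[\mathrm{Bl}_Z V]$ restricts to $0$; combining this with the factorization $P_{Z/Y}(t)=P_{U/Y}(t)\,P_{V/Y}(t)$ from the preceding lemma, the relation $\overline{P_{Z/Y}}(-E)=\overline{P_{U/Y}}(-E)\,\overline{P_{V/Y}}(-E)$ loses the factor $\overline{P_{U/Y}}(-E)$ once one kills $[\mathrm{Bl}_Z V]$, and the surviving presentation matches $A^*(\mathrm{Bl}_Z U)$, giving $\cJ=(\cJ_{U/Y},\,[\mathrm{Bl}_Z V])$.

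The main obstacle is the generation step in the kernel computations: proving that the listed classes generate the full kernel, rather than merely lying inside it. Concretely, the inclusion $\cJ_{Z/U}\subseteq\overline{\cJ_{V/Y}}$ (and its analogue in (3)) amounts to lifting a class on $U$ that vanishes on $Z$ to a class on $Y$ vanishing on all of $V$; this is where both Lefschetz hypotheses and clean intersection are used essentially, via surjectivity of the restriction $\cJ_{U/Y}\to\cJ_{Z/V}$ onto the corresponding kernel on $V$. Once these kernel identifications are in place the two presentations are isomorphic as $A^*(U)$-algebras and all five statements follow; the remaining task—checking that the blow-up module decompositions are compatible under restriction, so that no higher $E^i$-term of the kernel escapes the stated ideal—is routine but slightly delicate bookkeeping, and in (3) it also requires pinning down the class $[\mathrm{Bl}_Z V]$ in Keel coordinates.
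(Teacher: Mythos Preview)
The paper does not prove this lemma: it appears in the review section \S2.4, where the author is recalling results from Keel \cite{Keel} and Fulton--MacPherson \cite{FM} (cf.\ \cite[Lemma on p.~198]{FM}), and no argument is supplied. So there is nothing in the paper to compare your proof against; your route through Keel's presentation of $A^*(\mathrm{Bl})$ together with the normal-bundle identifications under clean intersection is exactly the standard argument one finds in those references.

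One caution on part (3): you invoke the factorization $P_{Z/Y}(t)=P_{U/Y}(t)\,P_{V/Y}(t)$ ``from the preceding lemma,'' but that lemma in the paper is stated only for \emph{transversal} intersections, whereas the present lemma assumes merely a clean intersection. Under a clean but non-transversal intersection the codimensions do not add, so $\deg P_{Z/Y}\ne \deg P_{U/Y}+\deg P_{V/Y}$ and the factorization fails as written. Your disjointness claim for the proper transforms after blowing up $Z$ is still correct under the clean hypothesis (since $T_U\cap T_V=T_Z$ forces $\mathbb{P}(N_{Z/U})\cap\mathbb{P}(N_{Z/V})=\emptyset$ in $\mathbb{P}(N_{Z/Y})$), so $[\mathrm{Bl}_Z V]$ does restrict to zero; but to finish (3) in the clean generality you should instead identify $[\mathrm{Bl}_Z V]$ directly in Keel coordinates and check that it supplies the missing relation $P_{Z/U}(-E_U)=0$, rather than appealing to the transversal factorization. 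In the applications later in the paper the relevant intersections of the $D_S$ are in fact transversal, so this does not affect the downstream results.
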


\begin{Lemma}\label{blowchow}
Let $\{ U_i \}$ be disjoint non-singular closed subvarieties of a smooth variety $Y$, such that $ U_i \hookrightarrow Y$ are Lefschetz.  Then the Chow ring $A^*( \mathrm{Bl}_{ \cup U_i} Y)$ is isomorphic the polynomial ring $A^*(Y)[x_i]$
,where $x_i$ corresponds to the exceptional divisor $\widetilde{U_i}$, modulo the ideal generated by 

\begin{enumerate}
\item $x_i \cdot x_j$ for $i \neq j$,

\item $\cJ_{U_i/ Y} \cdot x_i$ for all $i$,  

\item $P_{U_i / Y}( -x_i)$ for all $i$.
\end{enumerate}

\end{Lemma}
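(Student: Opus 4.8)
The plan is to reduce this multi-center statement to the single-blow-up case (part (3) of Lemma \ref{lefker} together with the standard Keel-type presentation) and to exploit the disjointness of the centers $U_i$. First I would observe that since the $U_i$ are pairwise disjoint, blowing up their union is the same as performing the blow-ups one at a time in any order, and the center of each successive blow-up remains isomorphic to $U_i$ (the proper transform of $U_i$ under a blow-up along a disjoint $U_j$ is just $U_i$ itself, still embedded by a Lefschetz embedding with the same conormal data). Thus I would set $Y^{(0)} = Y$ and $Y^{(k)} = \mathrm{Bl}_{\widetilde{U_k}} Y^{(k-1)}$, and argue by induction on the number of centers.

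For the base case of a single center $U \hookrightarrow Y$, the result is the classical description of the Chow ring of a blow-up along a Lefschetz center: $A^*(\mathrm{Bl}_U Y) \cong A^*(Y)[x]/I$, where $x$ is the class of the exceptional divisor $\widetilde U$ and $I$ is generated by $\cJ_{U/Y}\cdot x$ and $P_{U/Y}(-x)$. The first relation encodes that $\widetilde U$ is supported over $U$, so any class pulled back from $Y$ that restricts to zero on $U$ kills $x$; the second is the Grothendieck relation expressing that the self-intersection of the exceptional divisor is governed by the Chern classes of the normal bundle $N_{U/Y}$, with the sign $-x$ coming from the projective-bundle relation on $E = \mathbb{P}(N_{U/Y})$. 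I would cite \cite{Keel} for this presentation.

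For the inductive step, I would pass from $Y^{(k-1)}$ to $Y^{(k)} = \mathrm{Bl}_{\widetilde{U_k}} Y^{(k-1)}$ and apply the single-center presentation relative to $A^*(Y^{(k-1)})$, introducing the new generator $x_k$. The key point is that the embedding data for $\widetilde{U_k}$ in $Y^{(k-1)}$ is unchanged from that of $U_k$ in $Y$: because the earlier centers $U_1,\dots,U_{k-1}$ are disjoint from $U_k$, parts (1) and (4) of Lemma \ref{lefker} (with $Z = \emptyset$, and noting the exceptional divisors of the earlier blow-ups do not meet $\widetilde{U_k}$) give $\cJ_{\widetilde{U_k}/Y^{(k-1)}} = \cJ_{U_k/Y}$ and $P_{\widetilde{U_k}/Y^{(k-1)}}(t) = P_{U_k/Y}(t)$ inside $A^*(Y^{(k-1)})$. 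Hence the two new relations are exactly $\cJ_{U_k/Y}\cdot x_k$ and $P_{U_k/Y}(-x_k)$, matching (2) and (3) of the statement. Finally, the cross-term relations $x_i \cdot x_j = 0$ for $i \neq j$ follow from disjointness of supports: $\widetilde{U_i}$ and $\widetilde{U_j}$ are disjoint divisors in the final blow-up, so their classes multiply to zero. Assembling all generators $x_1,\dots,x_i$ and all relations gives the claimed presentation.

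The main obstacle I anticipate is the bookkeeping in the inductive step, specifically verifying that blowing up a disjoint center genuinely leaves the Lefschetz property, the kernel ideal $\cJ$, and the Chern polynomial $P$ of the remaining centers unchanged; this is precisely what Lemma \ref{lefker}(1),(4) is designed to supply, so the argument hinges on invoking those parts correctly with $Z=\emptyset$. A secondary subtlety is checking that the presentation is insensitive to the order in which the disjoint centers are blown up, which follows because disjoint blow-ups commute, but should be stated cleanly so that the final ring is manifestly symmetric in the indices.
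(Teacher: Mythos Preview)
The paper does not give its own proof of this lemma; it appears in the review section as a known result, essentially Keel's presentation of the Chow ring of a blow-up \cite{Keel}, so there is no argument in the paper to compare against. Your inductive reduction to the single-center case is a correct and standard way to justify the statement.

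One small correction in the inductive step: when you apply Lemma~\ref{lefker} with $Z = U_k \cap U_j = \emptyset$, the clause governing the kernel is part~(2), not part~(1). Part~(2) gives
\[
\cJ_{\widetilde{U_k}/Y^{(k-1)}} = \bigl(\cJ_{U_k/Y},\, x_1,\ldots,x_{k-1}\bigr),
\]
so the kernel genuinely picks up the earlier exceptional divisor classes; your claim that it equals $\cJ_{U_k/Y}$ unchanged is not quite right. This is harmless for the conclusion and in fact convenient: the relation $\cJ_{\widetilde{U_k}/Y^{(k-1)}}\cdot x_k$ produced at step $k$ already contains $x_j\cdot x_k = 0$ for $j<k$, so your separate disjoint-supports argument for relation~(1) becomes redundant (though still valid). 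Your citation of part~(4) for the Chern polynomial is correct.
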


\begin{Def}
A regular simple arrangement $\cS$ is a simple arrangement such
that for any $S_l \subset S_i$, there is $S_j \supset S_l$ such
that $S_l = S_i \cap S_j$.
\end{Def}

\begin{Thm}\label{chowring}
Let $\cS$ be a regular simple arrangement of subvarieties such
that all the inclusions $S_i \subset S_j$ and $S_i \subset Y$ are
Lefschetz embedding.  Then the Chow ring of $Y_{\cS}$ is
isomorphic to the polynomial ring $ A^*(Y)  [ x_{S_{1}}, \cdots, x_{S_{N}} ]      $ (where $x_{S_i}$ corresponds to the exceptional divisor $S_{i}^{(i+1)}$) 
modulo the ideal generated by

\begin{enumerate}

\item $x_{S_i} \cdot x_{S_j}$ for incomparable $S_i, S_j $,

\item $\cJ_{S_i/Y} \cdot x_{S_i}$ for all $i$,

\item $P_{S_i/Y} (- \Sigma_{S_j \subseteq S_i} x_{S_j})$ for all $i$.

\end{enumerate}

\end{Thm}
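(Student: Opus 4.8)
The plan is to induct on the cardinality $N$ of $\cS$, realizing $Y_{\cS}$ as the iterated blow-up $Y_{\cS} \cong \mathrm{Bl}_{\widetilde{G_N}} \cdots \mathrm{Bl}_{G_1} Y$ supplied by Theorem \ref{thm:order}, where $\cG = \cS = \{G_1, \dots, G_N\}$ is ordered so that $G_i \subset G_j$ forces $i < j$ and each initial segment is a building set. Writing $Y_0 = Y$, $Y_k = \mathrm{Bl}_{\widetilde{G_k}} Y_{k-1}$, and $Y_N = Y_{\cS}$, I let $x_{S_k}$ be the class of the exceptional divisor of the $k$-th blow-up. Since each $\widetilde{G_k} \hookrightarrow Y_{k-1}$ is Lefschetz, the single-center case of Lemma \ref{blowchow} (Keel's formula) gives
$$A^*(Y_k) \cong A^*(Y_{k-1})[x_{S_k}] \big/ \big( \cJ_{\widetilde{G_k}/Y_{k-1}} \cdot x_{S_k},\ P_{\widetilde{G_k}/Y_{k-1}}(-x_{S_k}) \big),$$
so the task reduces to rewriting the two new relations in terms of the untransformed data $\cJ_{G_k/Y}$ and $P_{G_k/Y}$.

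The combinatorial input is that, since $G_j$ is minimal in $\cG^{(j-1)}$ and $\cS$ is a simple arrangement, for every $k > j$ the transform $\widetilde{G_j}$ is either contained in $\widetilde{G_k}$ or disjoint from it: the clean intersection $\widetilde{G_j} \cap \widetilde{G_k}$ is empty or an element of $\cS^{(j-1)}$ inside the minimal $\widetilde{G_j}$, hence empty or $\widetilde{G_j}$ itself. I first handle the Chern polynomial by propagating $P_{G_k/Y}$ through the blow-ups $1, \dots, k-1$. When $\widetilde{G_j} \subset \widetilde{G_k}$, Lemma \ref{lefker}(5) replaces the variable by $t - x_{S_j}$; when $\widetilde{G_j} \cap \widetilde{G_k} = \emptyset$, Lemma \ref{lefker}(4) leaves $P$ unchanged. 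Composing over all $j < k$ gives $P_{\widetilde{G_k}/Y_{k-1}}(t) = P_{G_k/Y}\big(t - \sum_{G_j \subsetneq G_k} x_{S_j}\big)$, and setting $t = -x_{S_k}$ produces $P_{G_k/Y}\big(-\sum_{S_j \subseteq S_k} x_{S_j}\big)$, which is precisely relation (3).

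For the kernel ideal I propagate $\cJ_{G_k/Y}$ through the same sequence using Lemma \ref{lefker}(1)--(3). A disjoint center ($\widetilde{G_j} \cap \widetilde{G_k} = \emptyset$) enlarges the ideal by the exceptional class $x_{S_j}$ via part (2), while a nested center adjoins only classes of proper transforms via parts (1) and (3). Multiplying the resulting ideal by $x_{S_k}$, each adjoined generator $x_{S_j}$ coming from a disjoint---hence incomparable---$G_j$ yields the product $x_{S_j} x_{S_k}$, i.e. relation (1), so modulo relation (1) the relation $\cJ_{\widetilde{G_k}/Y_{k-1}} \cdot x_{S_k}$ collapses to $\cJ_{G_k/Y} \cdot x_{S_k}$, which is relation (2). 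Relation (1) itself expresses that for incomparable $S_i, S_j$ the boundary divisors $D_{S_i}$ and $D_{S_j}$ are disjoint, equivalently that $\{S_i, S_j\}$ is not a $\cG$-nest; this is part (2) of Theorem \ref{thm:existance}. Finally, to see that these relations generate the whole kernel and nothing is missing, I compare the two sides degree by degree as modules over $A^*(Y)$: the quotient ring has an $A^*(Y)$-basis whose monomials $\prod x_{S}^{\mu_S}$ are supported on $\cG$-nests $\cT$ with exponents ranging over $M_{\cT}$, matching term by term the additive decomposition of Theorem \ref{ChowY}, so the natural surjection is an isomorphism.

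The main obstacle is the kernel-ideal bookkeeping: one must check that every auxiliary generator created by Lemma \ref{lefker}(2),(3)---both the exceptional classes and the proper-transform classes $[\mathrm{Bl}_Z V]$ produced in the nested case, where regularity of $\cS$ guarantees a complementary $V$ with $\widetilde{G_k} \cap \widetilde{V} = \widetilde{G_j}$---is annihilated by $x_{S_k}$ modulo relations (1) and (2) already present, leaving no spurious relation. Once this collapse is verified the inductive step closes, and the degreewise comparison with Theorem \ref{ChowY} upgrades the resulting surjection to the claimed isomorphism.
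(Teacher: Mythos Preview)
The paper does not supply its own proof of Theorem \ref{chowring}: it is stated in Section \ref{wonderful} purely as a review of Hu's result \cite{Hu} (together with the preceding lemmas drawn from Keel \cite{Keel}), so there is no in-paper argument to compare against. Your proposal is a correct outline of the standard proof, which is indeed the approach taken in \cite{Hu} and, in the prototype case of diagonals, in \cite{FM}: induct along the blow-up sequence of Theorem \ref{thm:order}, apply Keel's formula (Lemma \ref{blowchow}) at each stage, and use Lemma \ref{lefker} to rewrite the kernel and Chern-polynomial data of each transformed center in terms of the original $\cJ_{S_i/Y}$, $P_{S_i/Y}$, and the previously introduced exceptional classes.

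One step that deserves more care than your one-line justification is the dichotomy ``$\widetilde{G_j}$ is contained in $\widetilde{G_k}$ or disjoint from it'' at stage $Y_{j-1}$. You invoke minimality of $\widetilde{G_j}$ in $\cG^{(j-1)}$, but after the first blow-up $\cG^{(j-1)}$ is in general a proper subset of the ambient arrangement $\cS^{(j-1)}$, so minimality in the building set alone does not force $\widetilde{G_j}\cap\widetilde{G_k}$ to equal $\widetilde{G_j}$ or $\emptyset$. The dichotomy is nonetheless true, and it is exactly what Hu verifies: when $G_j$ and $G_k$ are incomparable in $\cS$ their intersection $G_l$ lies in $\cS$ with $l<j$, and the regularity hypothesis ensures that the iterated blow-ups along all $G_m\subsetneq G_j$ eventually separate the proper transforms. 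Once that separation is established, your propagation of $P$ via Lemma \ref{lefker}(4),(5) and of $\cJ$ via parts (1)--(3), followed by the rank comparison against Theorem \ref{ChowY}, completes the argument just as in \cite{Hu}.
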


\section{Construction of $ X_D^{[ n ] } $ and $ X_D [ n ]  $}
\label{construction}

Fix a nonsingular divisor $D$ of an algebraic variety $X$ of
dimension $m$.  In this section, we review constructions of  a
compactification of configuration spaces of $n$ point in $X
\setminus D$, $ X_D^{[ n ] }$, and a compactification of
configuration spaces of $n$ distinct point in $X \setminus D$, $
X_D{[ n ] }$.  In this paper, we assume that $D$ is a divisor but every thing will work in the case of $D$ is a smooth subvariety after some adjustment.  See the details in  \cite{KS}.

\subsection{Construction}
For a subset $S$ of $N:=\{1,2,...,n\}$ define a nonsingular
subvariety in $X^n$
$$D_S := \{{\bf x}\in X^n \ | \ {\bf x}_i\in D, \ \forall\ i\in S
\}.$$ Let $\mathcal{A}$ be the collection of $D_S$ for all
$S\subset N:=\{1,...,n\}$ with $|S|\ge 2$. It is clear that the
collection is a simple arrangement of smooth subvarieties of $X^n$
and take a building set $\cG =\mathcal{A}$. Then define
$X_D^{[n]}$  to be the closure of $X^n\setminus\bigcup _{S}D_S$ in
$$X^n\times \prod _{S} \mathrm{Bl}_{D_S}X^n$$

It can be constructed by a successive blowups by theorem
\ref{thm:existance}. In particular we may order $\cG$ as
$D_{12},D_{123};D_{13},D_{23}$;...; $D_{12...,n}$;
$D_{U\cup\{n\}}$ with $|U|=n-2$ and $U\subset N\setminus
\{n\}$;...; $D_{in}$ for $i=1,...,n-1$ by theorem \ref{thm:order}.

\begin{Lemma}\label{poly} Let $I_1$ and $J_2$ be partitions of $N$.
The intersection of proper transforms of $\Delta _{I_1}$ and $
\Delta _{I_2}$ is the proper transform of the intersection $\Delta
_{I_1\wedge I_2}$.
\end{Lemma}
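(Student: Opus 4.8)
The plan is to argue by induction on the sequence of blow-ups that produces the ambient space, reducing the whole statement to a single model computation for one blow-up. Throughout, $\widetilde{(\ \cdot\ )}$ denotes the iterated proper transform in the sense of the Notation, with the convention that the proper transform of a subvariety contained in the current center is empty. Before any blow-up we are in $X^n$, where $\Delta_{I_1}\cap\Delta_{I_2}=\Delta_{I_1\wedge I_2}$ holds by the very definition of the operation $\wedge$ in Notation \ref{notation}; this is the base case.

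For the inductive step it suffices to prove the following one-step statement: if $A_1,A_2$ are members of the current simple arrangement meeting in $A_1\cap A_2=:B$ (again a member of the arrangement, or empty), and $Z$ is the minimal element along which we blow up, then in $\mathrm{Bl}_Z Y$ one has $\widetilde{A_1}\cap\widetilde{A_2}=\widetilde{B}$. Granting this, take $A_i=\widetilde{\Delta_{I_i}}$ at stage $k-1$; by the inductive hypothesis $B=\widetilde{\Delta_{I_1\wedge I_2}}$, and the one-step statement upgrades the identity to stage $k$. Composing over all $N$ blow-ups yields the lemma. By Def/Thm \ref{thm:one-blow} the collection of proper transforms is again a simple arrangement at every stage, so the hypotheses of the one-step statement (clean intersections, closure of the arrangement under intersection) persist.

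The one-step statement is a local computation. Because $A_1,A_2,Z$ belong to a simple arrangement they intersect cleanly, so \'etale-locally on $Y$ we may choose coordinates in which each of $A_1,A_2,Z$ is a coordinate subspace; the polydiagonals of $X^n$ are of this form to begin with (vanishing of differences of coordinate blocks). Writing $A_1=\{x_i=0:i\in P_1\}$, $A_2=\{x_i=0:i\in P_2\}$, $Z=\{x_i=0:i\in Q\}$, we get $B=\{x_i=0:i\in P_1\cup P_2\}$. Passing to the standard charts $U_j$ ($j\in Q$) of $\mathrm{Bl}_Z Y$, the proper transform of a coordinate subspace $\{x_i=0:i\in P\}$ with $Q\not\subseteq P$ is nonempty exactly in the charts $U_j$ with $j\notin P$, where it is cut out by the equations $x_i=0$ ($i\in P\setminus Q$) and $x_i'=0$ ($i\in P\cap Q$). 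Taking $P=P_1$ and $P=P_2$ and intersecting produces precisely the equations for $P=P_1\cup P_2$ in precisely the charts $U_j$ with $j\notin P_1\cup P_2$, which is the proper transform of $B$; away from the exceptional divisor the blow-up is an isomorphism and the identity is immediate. Hence $\widetilde{A_1}\cap\widetilde{A_2}=\widetilde{B}$.

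The genuine content is concentrated in two places, which I expect to be the main obstacle. First, one must justify the simultaneous local normal form, i.e.\ that a finite cleanly-intersecting family of smooth subvarieties can be straightened to coordinate subspaces at once; this is exactly where the clean-intersection hypothesis $T(U\cap V)=TU\cap TV$ is essential, and it is the only nonformal input. Second, one must handle the degenerate case $Z\subseteq B$ (equivalently $Q\subseteq P_1\cup P_2$), in which the blow-up \emph{separates} $\widetilde{A_1}$ and $\widetilde{A_2}$: there is then no chart $U_j$ with $j\notin P_1\cup P_2$, so $\widetilde{A_1}\cap\widetilde{A_2}=\emptyset$, while $\widetilde{B}=\emptyset$ as well since $B\subseteq Z$, and the identity survives as $\emptyset=\emptyset$. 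The same bookkeeping of empty against empty covers the cases $A_1\subseteq Z$ or $A_2\subseteq Z$. Finally one checks that the new arrangement members $\widetilde{S}\cap E$ introduced by the blow-up never appear in the present intersection, so that the combinatorial meet $I_1\wedge I_2$ is faithfully tracked throughout the sequence.
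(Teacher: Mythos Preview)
The paper states Lemma~\ref{poly} without proof, so there is no argument in the text to compare against; later (Lemma~\ref{trans+}) the paper even refers back to this lemma's proof as a template, so presumably a direct inductive computation along the blow-up sequence was intended. Your approach---induction on the blow-ups reducing to a one-step local model with coordinate subspaces---is the natural one and the chart computation you give is correct.

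There is, however, a small but genuine gap in the inductive framework. You write that ``$A_1,A_2,Z$ belong to a simple arrangement'' and invoke Def/Thm~\ref{thm:one-blow} to propagate this through the blow-up sequence. But the arrangement $\cS^{(k)}$ produced by that theorem contains only the transforms of the $D_S$'s (and their intersections with exceptional divisors); the polydiagonals $\Delta_{I_i}$ are \emph{not} members of it, and $\{D_S\}$ is not a building set for any arrangement that includes polydiagonals (a polydiagonal is not a transversal intersection of $D_S$'s). So Def/Thm~\ref{thm:one-blow} does not, as stated, give you the clean-intersection input you need at stage $k$. The repair is immediate and is essentially contained in your own computation: in $X^n$ the whole family $\{D_S\}\cup\{\Delta_I\}$ is simultaneously locally a family of coordinate subspaces (local coordinates on each $X$-factor, with a normal coordinate for $D$), and your chart calculation shows that this ``simultaneously linear'' property is preserved after blowing up any member of the family. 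That direct observation replaces the appeal to Def/Thm~\ref{thm:one-blow} and closes the loop. Incidentally, the degenerate cases you carefully handle ($A_i\subseteq Z$ or $B\subseteq Z$) never actually occur here, since no polydiagonal is contained in any $D_S$; your treatment of them is correct but superfluous for this particular lemma.
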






\begin{Cor} For $I\subset N$ with $|I|\ge 2$,
$\Delta _I(X_D^{[n]})$ form a building set of nonsingular
subvarieties of $X_D^{[n]}$ with respect to the set of all
polydiagonals.
\end{Cor}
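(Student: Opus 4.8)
The plan is to combine Lemma~\ref{poly} with the classical fact that the irreducible polydiagonals form a building set in $X^n$, and then to check that the successive blow-ups along the centers $D_S$ preserve this building-set structure for the proper transforms of the polydiagonals. First I would record the base case: in $X^n$ the collection of all polydiagonals is a simple arrangement whose intersection lattice is governed by the operation $\wedge$, and the irreducible diagonals $\Delta_T$ (for $T\subset N$, $|T|\ge 2$) form a building set for it. Concretely, for a partition $J$ of $N$ with non-singleton blocks $J_1,\dots,J_k$ one has $\Delta_J=\bigcap_i \Delta_{J_i}$, the $\Delta_{J_i}$ are the minimal irreducible diagonals containing $\Delta_J$, and they meet transversally; this is exactly the building-set condition (see \cite{FM}).

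Next I would transport this to $X_D^{[n]}$ using Lemma~\ref{poly}, which says that taking proper transforms commutes with intersection through $\wedge$, namely $\widetilde{\Delta_{I_1}}\cap\widetilde{\Delta_{I_2}}=\widetilde{\Delta_{I_1\wedge I_2}}$. Iterating over the blocks of a partition $J$ yields $\widetilde{\Delta_J}=\bigcap_i\widetilde{\Delta_{J_i}}$, so the minimal irreducible transforms containing $\widetilde{\Delta_J}$ are precisely the $\widetilde{\Delta_{J_i}}$ with $|J_i|\ge 2$, and their intersection is again $\widetilde{\Delta_J}$. This identifies the candidate $\cG$-factors and establishes the ``intersection equals $\widetilde{\Delta_J}$'' half of the building-set axiom. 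Nonsingularity of each $\widetilde{\Delta_I}$ follows because every $\Delta_I$ meets each center $D_S$ cleanly, so its proper transform stays smooth through every blow-up; together with the closure under intersection furnished by Lemma~\ref{poly}, this shows the $\widetilde{\Delta_I}$ form a simple arrangement with the same lattice as the polydiagonals of $X^n$.

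The remaining and genuinely delicate point is transversality of the $\cG$-factors: I must check that the $\widetilde{\Delta_{J_i}}$ meet transversally in $X_D^{[n]}$, and this is where I expect the main obstacle. Transversality is \emph{not} automatically inherited from $X^n$, because a single center $D_S$ need only meet a diagonal $\Delta_T$ cleanly, not transversally, whenever $S\cap T\ne\emptyset$. The content of the step is that the proper transforms of a transversally intersecting family of diagonals nevertheless remain transverse after blowing up such a center, precisely because $D_S$ meets each $\Delta_{J_i}$ and all of their mutual intersections cleanly. I would verify this by induction on the blow-ups in the order of Theorem~\ref{thm:order}, reducing at each stage to a local computation at a point of the exceptional divisor. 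Writing $D=\{z=0\}$ locally and choosing an affine chart for $\mathrm{Bl}_{D_S}$, one finds that the linearized equations cutting out the distinct $\widetilde{\Delta_{J_i}}$ retain independent leading terms along the exceptional locus: each transform keeps a private coordinate direction (an affine blow-up coordinate, or a normal direction to one of the disjoint blocks $J_i$), so the relevant differentials stay linearly independent even on the exceptional divisor. Carrying this local verification through every center completes the induction and shows that $\{\Delta_I(X_D^{[n]})\}_{|I|\ge 2}$ is a building set for the arrangement of all polydiagonals in $X_D^{[n]}$.
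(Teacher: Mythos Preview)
Your approach is correct and is essentially what the paper has in mind: the corollary is stated there without proof, as an immediate consequence of Lemma~\ref{poly}, and your argument simply fleshes out the details the paper leaves implicit.

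One comment on the part you flag as ``genuinely delicate'': the transversality of the $\widetilde{\Delta_{J_i}}$ does not require the local blow-up computation you outline. Once Lemma~\ref{poly} gives you that the (scheme-theoretic) intersection $\widetilde{\Delta_{J_1}}\cap\cdots\cap\widetilde{\Delta_{J_k}}$ equals $\widetilde{\Delta_J}$, and you know each $\widetilde{\Delta_I}$ is nonsingular of the same codimension as $\Delta_I\subset X^n$ (proper transforms preserve dimension), transversality is automatic: a clean intersection whose codimension equals the sum of the codimensions is transversal. So the dimension count inherited from $X^n$ together with Lemma~\ref{poly} already does the job, and the inductive coordinate verification along the exceptional loci, while valid, is more than is needed.
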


\begin{Def}
Define $X_D[n]$ to be the closure of $X_D^{[n]}\setminus \bigcup
_{|I|\ge 2} \Delta _I (X_D^{[n]})$
\[ X_D^{[n]}\times \prod _{|I|\ge 2} \mathrm{Bl}_{\Delta _I (X_D^{[n]})} X_D^{[n]}\]
\end{Def}

\begin{Thm}\label{properties}

\begin{enumerate}

\item $X_D[n]$ is a nonsingular variety. There is a natural
projection from $X_D[N]$ to $X_D[I]$ for any subset $I$ of $N$.
There is a natural $S_n$-action on $X_D[n]$.

\item The boundary is the union of divisors
$\widetilde{D_S}$ with $|S|\ge 1$, and $\widetilde{\Delta _I}$
with $|I|\ge 2$ of normal crossings.

\item The intersections of boundary divisors are nonempty if and
only if they are nested.  Here $\{ D_{S_i}, \Delta _{I_j} \}$ is nested if each pair $S_i$ and $S_k$
($T_j$ and $T_l$) is either disjoint or one is contained in the
other and each pair $S_i$ and $T_k$ is either disjoint or $T_k$ is
contained in $S_i$.

\item We may take order $D_S$; $\Delta _I$ for $n \not \in S, I$; and
then $D_T$ with $n\in T$, then $\Delta _J$ with $n\in J$.

\end{enumerate}

\end{Thm}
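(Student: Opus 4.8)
The plan is to read the four assertions off the two-stage wonderful-compactification description of $X_D[n]$: first $X_D^{[n]}=(X^n)_{\cG}$ for the building set $\cG=\cA=\{D_S:|S|\ge 2\}$, and then $X_D[n]$ as the wonderful compactification of $X_D^{[n]}$ for the building set of polydiagonals $\{\Delta_I(X_D^{[n]}):|I|\ge 2\}$ furnished by the preceding Corollary. Nonsingularity in (1) is then immediate, since each stage is a wonderful compactification of a nonsingular variety, so Theorem \ref{thm:existance} applies twice. For the $S_n$-action I would note that permuting coordinates sends $D_S\mapsto D_{\sigma(S)}$ and $\Delta_I\mapsto\Delta_{\sigma(I)}$, hence preserves both building sets, and the canonicity of the closure construction in Theorem \ref{thm:existance} lets the action descend to $X_D[n]$. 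The forgetful projection $X_D[N]\to X_D[I]$ I would obtain by extending the coordinate projection $F(X\setminus D,N)\to F(X\setminus D,I)$ of interiors to a morphism; this is the one place where I would rely on the explicit inductive construction of \cite{KS} rather than on the abstract machinery, the order in (4) being designed precisely to make this extension transparent.

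For (2) and (3) I would separate the boundary divisors into the families they come from. The second stage produces the exceptional divisors $\widetilde{\Delta_I}$ with $|I|\ge 2$; the first stage produces the exceptional $\widetilde{D_S}$ with $|S|\ge 2$, and to these I add the proper transforms of the codimension-one loci $D_{\{i\}}$, which are smooth and transverse to everything throughout, accounting for the divisors $\widetilde{D_S}$ with $|S|=1$. Their normal-crossing property and intersection pattern come from Theorem \ref{thm:existance}(2) applied at each stage and transported through the second stage by repeated use of Def/Thm \ref{thm:one-blow}. The combinatorial core of (3) is to convert the abstract $\cG$-nest condition into the explicit one, and the two families behave differently. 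Because the $D_S$ are closed under intersection, $D_S\cap D_T=D_{S\cup T}$ again lying in $\cG$, each $D_S$ is its own $\cG$-factor, so a family $\{\widetilde{D_S}\}$ is a nest exactly when the index sets form a chain (a comparability condition, in contrast to the Fulton--MacPherson rule); whereas $\Delta_I\cap\Delta_J=\Delta_{I\wedge J}$ (Lemma \ref{poly}) is generally not a single-block diagonal, so the $\widetilde{\Delta_I}$ obey the usual Fulton--MacPherson condition, pairwise disjoint or nested.

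The cross terms are the genuinely new computation, and I expect them to be the main obstacle. One cannot collapse the two stages into a single wonderful compactification of $X^n$ with building set $\{D_S\}\cup\{\Delta_I\}$, since for $I\subseteq S$ the subvarieties $\Delta_I$ and $D_S$ meet cleanly but not transversally (already for $I=S=\{1,2\}$ the intersection has codimension $m+1$ rather than $m+2$), so they are not a valid transverse $\cG$-factorization of their intersection. In the two-stage picture I would instead track the proper transform $\Delta_I(X_D^{[n]})$ and decide when the second-stage exceptional $\widetilde{\Delta_I}$ meets the first-stage divisor $\widetilde{D_S}$. The mechanism is that whenever $I$ meets $S$ the coincidence forces $D_S\cap\Delta_I=\Delta_I\cap D_{S\cup I}$ to lie inside $D_{S\cup I}$; if moreover $I\not\subseteq S$, then $D_{S\cup I}\subsetneq D_S$ is a strictly smaller center already blown up in the first stage, and that blow-up pulls $\widetilde{\Delta_I}$ off $\widetilde{D_S}$, whereas for $I\subseteq S$ (so $S\cup I=S$) or for $S\cap I=\emptyset$ (when the intersection is not forced into any such $D$) the two divisors genuinely meet. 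This yields exactly the asymmetric cross condition in (3), and verifying cleanness of the whole combined arrangement in $X_D^{[n]}$, so that the second stage stays within the hypotheses of Theorem \ref{thm:existance}, is the delicate technical point underlying it.

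Finally, for (4) I would invoke Theorem \ref{thm:order}(2): it is enough to check that the displayed order --- the $D_S$ and $\Delta_I$ with $n\notin S,I$ first, then the $D_T$ and $\Delta_J$ with $n\in T,J$, each family internally refining the inclusion order so that smaller centers are blown up first --- satisfies condition $(*)$, namely that every initial segment is again a building set. Granting that the combined arrangement is simple, this is a routine induction, since each initial segment is closed under passage to $\cG$-factors. Deferring all centers containing $n$ to the end is exactly what exhibits $X_D[n]$ as an iterated construction over $X_D[n-1]$, so this ordering and the forgetful morphism of (1) are naturally established together.
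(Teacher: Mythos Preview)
The paper does not actually supply a proof of Theorem~\ref{properties}; it is stated in Section~\ref{construction} as a review of results established in \cite{KS}, so there is no in-paper argument to compare against. Your proposal is therefore not competing with a proof here but with an external reference, and as a sketch it is broadly in the spirit of how such results are obtained: two applications of Theorem~\ref{thm:existance}, symmetry of the building sets for the $S_n$-action, and Theorem~\ref{thm:order}(2) for the ordering in~(4).

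One substantive point deserves comment. Your analysis of the $D_S$ family is correct: since the first stage has $\cG=\cS=\cA$, every element is its own $\cG$-factor, and a $\cG$-nest is precisely a chain. Concretely, $\widetilde{D_{\{1,2\}}}$ and $\widetilde{D_{\{3,4\}}}$ are separated already after blowing up $D_{\{1,2,3,4\}}$, since their normal directions span complementary subbundles of $N_{D_{\{1,2,3,4\}}/X^n}$. This is \emph{stricter} than the condition written in part~(3) of the theorem, which allows the $S_i$ to be pairwise ``disjoint or nested''; the paper itself confirms the chain description in Section~\ref{groups-motives} (``We have $\cS=\cG$, so a $\cG$-nest is just a chain of elements in $\cS$''). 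So your argument does not prove the statement exactly as written, but rather a sharpened version of~(3) for the $D_S$ divisors. You should flag this discrepancy rather than silently proving something different from what is asserted.

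Your treatment of the cross terms $\widetilde{D_S}\cap\widetilde{\Delta_I}$ is the right mechanism (forcing into $D_{S\cup I}$ and separating at that earlier blow-up), and it matches the asymmetric condition in~(3). The caveat that the combined collection $\{D_S\}\cup\{\Delta_I\}$ is not a building set in $X^n$, so that one genuinely needs the two-stage construction and Lemma~\ref{poly}/Lemma~\ref{diffclean}, is well taken and is exactly why the paper proceeds in two steps.
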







\section{Chow groups and motives}
\label{groups-motives}
In this section, we will apply theorem \ref{ChowY} to $X_D^{ [n]}$
and $X_D [n]$.  For simplicity, we assume that $X$ is complete.

\subsection{Chow group and motive of $X_D^{ [n]}$}

In this case, our $Y= X^n, \cS=\cG= \{ D_S : S \subset N
\text{with} |S| \geqslant 2 \}$ where $D_S= \{{\bf x}\in X^n \ | \
{\bf x}_i\in D, \ \forall\ i\in S \}$.  We have $\cS=\cG$, so a
$\cG$-nest is just a chain of elements in $\cS$, $\cT = \{ D_{S_1}
\subset D_{S_2} \subset \cdots \subset D_{S_k} \}$.  Thus $Y_0 \cT
= D_{S_1}$.

A chain $\cCH$ is a chain of subset of $N$, $S_{k} \subsetneq
\cdots \subsetneq S_2 \subsetneq S_1$, such that $S_k$ is not a
singleton.  Obviously, there is one-to one correspondence between a set of chains of $\cS$ and a set of chains of $N$.  We say $\emptyset$ is also a chain.  We define
$\text{max}_{\cCH(\cT)} S$ as the maximal element of $\cCH(\cT)$ which is
strictly contained in $S$, where $\cCH(\cT)$ is the chain of $N$ which corresponds to $\cT$.  If there is no such element, then we
define $\text{max}_{\cCH(\cT)} S = \emptyset$

Now let $G= D_S$ and let's compute $r_{\cT}(G)$;

\begin{equation*}
\begin{split}
r_{\cT}(G) &= \text{dim} (\bigcap_{G \subsetneq T \in \cT} T) -
\text{dim} G \\
           &= \text{dim}(D_{\text{max}_{\cCH(\cT)} S}) - \text{dim} D_S \\
           &= |S|-|\text{max}_{\cCH(\cT)} S|.\\
\end{split}
\end{equation*}

\begin{Rmk}[When $D$ is not a divisor]
When $D$ is not a divisor, then we also blow up $D_{ \{i \} }$. So
we will not exclude the case such that $S_k$ is a singleton for $
\{ S_{k} \subsetneq \cdots \subsetneq S_2 \subsetneq S_1 \}$.
$r_{\cT}(G)$ will be also changed, it will be multiplied by the
codimension of $D$ in $X$.

\end{Rmk}

For a chain $\cCH(\neq \emptyset )$ , define
$$M_{\cCH}:= \{ \overrightarrow{\mu}= \{ \mu_{S} \}_{S \in \cCH}  : 1 \leq
\mu_S \leq |S| - |\text{max}_{\cCH} S| -1 \}.$$ For
$\cCH=\emptyset$, define $ M_{\cCH}$ is consist of one
$\overrightarrow{\mu}$ with $\| \overrightarrow{\mu} \|=0$ and
$D_{\emptyset}=X^n$.

\begin{Thm}
Let $X$ be a complete nonsingular variety.  Then we have the Chow
group and motive decompositions

$$ A^*(X_D^{ [n]}) = \bigoplus_{\cCH} \bigoplus_{\overrightarrow{\mu} \in M_{\cCH} }
A^{*- \| \overrightarrow{\mu} \|} (D_{S_{\cCH}}),$$

$$ h(X_D^{ [n]}) = \bigoplus_{\cCH} \bigoplus_{\overrightarrow{\mu} \in M_{\cCH} } h (D_{S_{\cCH}})(\|
 \overrightarrow{\mu} \| ) ,$$
where $\cCH$ runs through all the
chains of $N$ and $S_{\cCH}$ is the maximal element in $\cCH$.

\end{Thm}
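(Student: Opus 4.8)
The plan is to apply the general Chow group and motive decomposition of Theorem~\ref{ChowY} directly to the special case $Y = X^n$ with $\cS = \cG = \{ D_S : S \subset N, |S| \geqslant 2\}$, and then to unpack what the abstract combinatorial data $(\cT, \overrightarrow{\mu}, Y_0\cT)$ becomes in this concrete situation. Since we have already established that $\cS = \cG$, every $\cG$-nest $\cT$ is simply a chain $D_{S_1} \subset D_{S_2} \subset \cdots \subset D_{S_k}$ in $\cS$, which by the inclusion-reversing correspondence $S \mapsto D_S$ is the same datum as a chain $S_k \subsetneq \cdots \subsetneq S_1$ of subsets of $N$ with $S_k$ not a singleton (so that $|S_k| \geqslant 2$). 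I would state this bijection between $\cG$-nests and the chains $\cCH$ explicitly as the first step, noting that it sends the empty nest to the empty chain.

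Second, I would identify the geometric factor $Y_0\cT$ appearing in Theorem~\ref{ChowY}. By definition $Y_0\cT = \bigcap_{T \in \cT} T$, and since the $D_S$ satisfy $D_{S_i} \cap D_{S_j} = D_{S_i \cup S_j}$ with the chain ordered by inclusion, the intersection collapses to the smallest member $D_{S_1}$, which is $D_{S_\cCH}$ where $S_\cCH$ is the maximal subset in the chain. This matches the summand $D_{S_\cCH}$ in the claimed formula. Third, I would reconcile the index sets $M_\cT$ and $M_\cCH$ and the weights $\| \overrightarrow{\mu} \|$. Here I would invoke the computation of $r_\cT(G)$ already carried out just before the theorem, namely $r_\cT(D_S) = |S| - |\mathrm{max}_{\cCH(\cT)} S|$, so that the condition $1 \leqslant \mu_G \leqslant r_\cT(G) - 1$ defining $M_\cT$ becomes exactly the condition $1 \leqslant \mu_S \leqslant |S| - |\mathrm{max}_\cCH S| - 1$ defining $M_\cCH$. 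The empty-chain case is handled by the stated convention that $M_\emptyset$ consists of a single $\overrightarrow{\mu}$ with $\| \overrightarrow{\mu} \| = 0$ and $D_\emptyset = X^n$, which reproduces the leading term $A^*(X^n)$ (respectively $h(X^n)$) of Theorem~\ref{ChowY}.

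Finally, with the three identifications $\{ \cG\text{-nests} \} \leftrightarrow \{ \text{chains } \cCH \}$, $Y_0\cT = D_{S_\cCH}$, and $M_\cT = M_\cCH$ with matching weight $\| \overrightarrow{\mu} \|$ in hand, the abstract decomposition of Theorem~\ref{ChowY} is literally rewritten as the two displayed formulas, for both the Chow groups (using that $X$ complete implies $X^n$ complete, so the motive statement of Theorem~\ref{ChowY} applies) and the Chow motives. The proof is therefore a translation, not a new computation. I expect the only genuine point requiring care to be the bookkeeping of the weight $\| \overrightarrow{\mu} \| = \sum_{S \in \cCH} \mu_S$ and the verification that the range bounds from $r_\cT$ transcribe correctly across the inclusion-reversing bijection; in particular one must check that $\mathrm{max}_{\cCH(\cT)} S$ is the correct combinatorial surrogate for $\bigcap_{G \subsetneq T \in \cT} T$, including the boundary convention that this intersection is all of $X^n$ (giving $|\mathrm{max}_\cCH S| = 0$) when no strictly larger member exists.
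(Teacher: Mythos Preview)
Your proposal is correct and follows exactly the paper's own approach: the paper carries out precisely the three identifications you describe (nests $\leftrightarrow$ chains, $Y_0\cT = D_{S_1}$, and the computation $r_\cT(D_S) = |S| - |\mathrm{max}_{\cCH(\cT)} S|$) in the discussion immediately preceding the theorem, and then states the theorem without a separate proof block, since it is a direct transcription of Theorem~\ref{ChowY}. Your write-up is, if anything, more explicit than the paper's about why the translation works.
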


\subsection{Chow group and motif of $X_D [n]$}
We use the same notation as \cite{Li2}.

\begin{enumerate}
\item We call two subsets $I, J \subset N$ are overlapped if $I
\cap J$ is not a nonempty proper subset of both $I$ and $J$.  For
a set $ \cN$ of subsets of $N$, we call $I$ is compatible with
$\cN$, denoted by $I \sim \cN$, if $I $ does not overlap any
elements of $\cN$.

A nest $\cN$ is a set of subset of $N$ such that any pair $I \neq
 J \in \cN$ are not overlapped and contains all singletons.

 For a given nest $\cN$, define $\cN^{\circ}:= \cN \setminus
\{ \{1\}, \cdots , \{n\} \}$.

A nest $\cN$ naturally corresponds to a tree (which may not be
connected) with each node is labeled by an element of $\cN$. Let
$c(\cN)$ be the number of connected components of the forest which
corresponds to $\cN$.  Denote by $c_{I} (\cN)$ the number of
maximal elements of the set $\{ J \in \cN: J \subsetneq I \}$,
which is called the number of sons of the node $I$.

Let $\overline{\Delta_{\cN}} := \cap_{I \in \cN} \Delta_I(X_D^{
[n]})$ in this section.

\item For a nest $\cN ( \neq \{ \{ 1 \}, \cdots \{n \} \})$,
define
$$M_{\cN}:= \{ \overrightarrow{\mu} = \{ \mu_{I} \}_{I \in \cN} : 1
\leq \mu_I \leq m( c_I -1)-1 \}$$ where $m= \text{dim} X$.

For $\cN=\{ \{ 1 \}, \cdots \{n \} \}$, define $M_{\cN}= \{
\overrightarrow{\mu} \}$ with $\| \mu \| = 0 $.
\end{enumerate}

As in \cite{Li2}, we have

\begin{Prop}\label{n-point-chow}
We have the Chow group and motive decompositions

$$ A^*(X_D [n]) = \bigoplus_{\cN} \bigoplus_{\overrightarrow{\mu} \in M_{\cN} }
A^{*- \| \overrightarrow{\mu} \|} (\overline{{\Delta}_{\cN}}),$$

$$ h(X_D [n]) = \bigoplus_{\cN} \bigoplus_{\overrightarrow{\mu} \in M_{\cN} } h (\overline{\Delta_{\cN}} )(\|
 \overrightarrow{\mu} \| ) ,$$

 where $\cN$ runs through all the
nest of $N$

\end{Prop}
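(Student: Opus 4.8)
The plan is to recognize $X_D[n]$ as a wonderful compactification and feed it into Theorem \ref{ChowY}. By the preceding Corollary, the partial diagonals $\cG := \{\Delta_I(X_D^{[n]}) : |I|\ge 2\}$ form a building set in $Y := X_D^{[n]}$ with respect to the arrangement $\cS$ of all polydiagonals, and by the Definition of $X_D[n]$ together with Theorem \ref{thm:existance} we have $X_D[n] = Y_\cG$. Since $X$ is complete, so are $X^n$, $X_D^{[n]}$ and $X_D[n]$ (each is obtained from the previous by blowups), so both the Chow group and the Chow motive halves of Theorem \ref{ChowY} apply. It then suffices to match the indexing data $(\cT, M_\cT, Y_0\cT)$ of that theorem with the data $(\cN, M_\cN, \overline{\Delta_\cN})$ of the proposition.

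First I would set up the bijection between $\cG$-nests and combinatorial nests of $N$. The $\cG$-factors of the polydiagonal with blocks $I_1,\dots,I_l$ are exactly the $\Delta_{I_j}$ with $|I_j|\ge 2$, so reading off $\cG$-factors from flags identifies $\cG$-nests $\cT$ with collections of pairwise non-overlapping non-singleton subsets, i.e. with $\cN^{\circ}=\cN\setminus\{\{1\},\dots,\{n\}\}$ for a nest $\cN$ (exactly as in \cite{Li2}, \cite{FM}). Under this identification the empty $\cG$-nest, which contributes the summand $A^*(Y)$ in Theorem \ref{ChowY}, corresponds to the trivial nest $\cN=\{\{1\},\dots,\{n\}\}$ with $\|\overrightarrow{\mu}\|=0$ and $\overline{\Delta_\cN}=X_D^{[n]}$. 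Because $\Delta_{\{k\}}(X_D^{[n]})=X_D^{[n]}$ for singletons, one gets $Y_0\cT=\bigcap_{I\in\cN^{\circ}}\Delta_I(X_D^{[n]})=\bigcap_{I\in\cN}\Delta_I(X_D^{[n]})=\overline{\Delta_\cN}$, and the vectors in $M_\cT$ and $M_\cN$ are both indexed by $\cN^{\circ}$, so $\|\overrightarrow{\mu}\|$ agrees.

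The main computation will be to show $r_\cT(\Delta_I)=m(c_I-1)$, which identifies the range $1\le\mu_{\Delta_I}\le r_\cT(\Delta_I)-1$ of Theorem \ref{ChowY} with the defining condition of $M_\cN$. Here I would invoke Lemma \ref{poly}: intersections of the proper transforms $\Delta_I(X_D^{[n]})$ are proper transforms of the corresponding polydiagonals in $X^n$, so every dimension entering $r_\cT$ may be computed in $X^n$, where $\codim\Delta_I=(|I|-1)m$. The minimal elements of $\cT$ strictly containing $\Delta_I$ are the $\Delta_J$ for the non-singleton sons $J_1,\dots,J_r$ of $I$, and their intersection is the polydiagonal whose nontrivial blocks are those sons. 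Since $\cN$ contains all singletons, the full set of sons of $I$ partitions $I$; writing $c_I=r+s$ with $s$ singleton sons, so that $\sum_{j=1}^r|J_j|=|I|-s$, a short count gives
\[ r_\cT(\Delta_I)=\codim\Delta_I-\codim\Big(\bigcap_{j=1}^{r}\Delta_{J_j}\Big)=\big[(|I|-1)-\textstyle\sum_{j=1}^{r}(|J_j|-1)\big]\,m=(r+s-1)m=m(c_I-1), \]
with the convention $\bigcap=Y$ (codimension $0$) when $r=0$, which also handles leaves correctly.

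Assembling these three identifications inside Theorem \ref{ChowY} yields both displayed decompositions at once. The one step that needs genuine care is the $r_\cT$ computation: the sons that partition $I$ live in the \emph{full} nest $\cN$, whereas the intersection defining $r_\cT$ only sees the non-singleton sons present in $\cT$, and it is precisely the bookkeeping of the loose singleton elements of $I$ that converts $|I|$ into the son-count $c_I$. Everything else is formal, and because Theorem \ref{ChowY} supplies the motive decomposition in lockstep with the Chow group decomposition, the motive statement follows verbatim once the data have been matched.
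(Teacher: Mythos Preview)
Your proposal is correct and follows essentially the same approach as the paper, which simply says ``As in \cite{Li2}, we have'' and defers to the general machinery of Theorem~\ref{ChowY}; you have spelled out precisely that argument, including the identification of $\cG$-nests with combinatorial nests of $N$ and the key computation $r_{\cT}(\Delta_I)=m(c_I-1)$ via Lemma~\ref{poly}.
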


Now we need to simplify $A^*(\overline{\Delta}_{\cN})$ and $h (\overline{\Delta}_{\cN})$.

\begin{Lemma}\label{diffclean}
$D_S$ and $\Delta_I$ intersect cleanly.

\end{Lemma}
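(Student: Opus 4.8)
The plan is to verify the three requirements in the definition of a clean intersection, namely $D_S \neq \Delta_I$, nonsingularity of the scheme-theoretic intersection $D_S \cap \Delta_I$, and the tangent identity $T(D_S \cap \Delta_I) = TD_S \cap T\Delta_I$, by reducing everything to a computation with linear forms in local coordinates. Here $\Delta_I$ is the diagonal $\{ {\bf x} : {\bf x}_a = {\bf x}_b \text{ for all } a,b \in I \}$; for a general polydiagonal the same argument applies block by block. Fix a point ${\bf p} = (p_1,\dots,p_n) \in D_S \cap \Delta_I$. Since $D \hookrightarrow X$ is a smooth divisor I may choose, around each $p_i$, local coordinates $z_1,\dots,z_m$ on $X$ with $D = \{ z_1 = 0 \}$; because the coordinates indexed by $I$ all coincide at ${\bf p}$, I use one common chart for every factor indexed by $I$, and write $z^{(a)}_j$ for the $j$-th coordinate of the $a$-th factor. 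That $D_S \neq \Delta_I$ is clear, since $\Delta_I$ lies in a diagonal of $X^n$ while $D_S$ does not.

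In these coordinates $\cI_{\Delta_I}$ is generated by the linear forms $z^{(a)}_j - z^{(b)}_j$ (for $a,b \in I$, $1 \le j \le m$) and $\cI_{D_S}$ by the linear forms $z^{(i)}_1$ (for $i \in S$). Hence $\cI_{D_S} + \cI_{\Delta_I}$ is generated entirely by linear forms, so locally the scheme-theoretic intersection $D_S \cap \Delta_I$ is a linear subspace of the chart; in particular it is reduced and nonsingular. Equivalently, under the identification $\Delta_I \cong X^{\,n-|I|+1}$ that keeps a single representative of the block $I$, the subscheme $D_S \cap \Delta_I$ is the product-type locus in which the representative of $I$ lies in $D$ precisely when $S \cap I \neq \emptyset$ and each factor indexed by $S \setminus I$ lies in $D$; being a product of copies of the smooth $D$ and $X$, it is nonsingular. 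This gives the second requirement.

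For the tangent identity, recall that the Zariski tangent space of a subscheme at a point is the common kernel of the differentials of the elements of its ideal. Applying this to $\cI_{D_S}$, $\cI_{\Delta_I}$ and their sum, and using $d(g+h)=dg+dh$, we get at ${\bf p}$
$$ T_{\bf p}(D_S \cap \Delta_I) = \bigcap_{f \in \cI_{D_S}+\cI_{\Delta_I}} \ker df_{\bf p} = \Big( \bigcap_{f \in \cI_{D_S}} \ker df_{\bf p} \Big) \cap \Big( \bigcap_{f \in \cI_{\Delta_I}} \ker df_{\bf p} \Big) = T_{\bf p} D_S \cap T_{\bf p}\Delta_I. $$
Since $D_S \cap \Delta_I$ is nonsingular, this Zariski tangent space is its honest tangent space, so the displayed equality is exactly the required tangent condition. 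Together with $D_S \neq \Delta_I$, this shows the intersection is clean.

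The point to watch is that the defining linear forms of $\cI_{D_S} + \cI_{\Delta_I}$ need not be linearly independent: when $|S \cap I| \ge 2$ the conditions ``${\bf x}_i \in D$'' for the several indices $i \in S \cap I$ collapse to one condition once those coordinates are forced equal along $\Delta_I$, so $D_S$ and $\Delta_I$ meet cleanly but \emph{not} transversally, with $\codim(D_S \cap \Delta_I)$ falling short of $\codim D_S + \codim \Delta_I$ by $|S \cap I| - 1$. This redundancy is exactly where one might fear a non-reduced or singular intersection; the linear-forms description removes the worry, since an ideal generated by linear forms always cuts out a smooth, reduced linear subscheme no matter how dependent the forms are, and the same kernel computation then delivers the tangent identity.
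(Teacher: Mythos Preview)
Your proof is correct and follows essentially the same strategy as the paper's: a direct verification of the clean-intersection condition from the local description of $D_S$ and $\Delta_I$ in $X^n$. The paper's version is terser---it phrases the tangent check in terms of arcs and tacitly assumes the scheme-theoretic intersection is nonsingular---whereas you make that nonsingularity explicit via the linear-forms description and add the useful observation that the intersection fails to be transversal precisely when $|S\cap I|\ge 2$.
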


\begin{proof}
We only need to prove that $T D_S \cap T \Delta_I \subset T (D_S \cap \Delta_I)$.  An arc in $\Delta_I$ have a coordinate representative $( {\bf{x}}_i) \in X^n$ such that $\bf{x}_i=\bf{x}_j$ for $i, j \in I$. For an arc in $\Delta_I$ to be an arc in $D_S$, ${\bf{x}}_i \in D$ for all $i \in S$.  Thus the arc should be an arc in $D_S \cap \Delta_I$.
\end{proof}





\begin{Prop}\label{deltaiso}
$\overline{\Delta_I}$ is isomorphic to $X^{[|I^c|+1]}_D$. 

\end{Prop}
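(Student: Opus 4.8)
The plan is to identify $\overline{\Delta_I}$ with the model obtained by restricting the iterated blow-up construction of $X_D^{[n]}$ to the transform of the polydiagonal $\Delta_I$, and to recognize the effective centers of that restricted construction as the building set of $X_D^{[|I^c|+1]}$. Write $I^c = N\setminus I$ and $k=|I^c|$. First I would fix the collapsing isomorphism $\Delta_I \cong X^{\{0\}\cup I^c}$, where the new coordinate $0$ records the common value of the coordinates indexed by $I$; under it I compute $D_S\cap\Delta_I$ for each $S$ with $|S|\ge 2$. Setting $S'=S$ when $S\subset I^c$ and $S'=(S\cap I^c)\cup\{0\}$ when $S\cap I\neq\emptyset$, one checks directly that $D_S\cap\Delta_I = D_{S'}\subset X^{\{0\}\cup I^c}$. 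In particular $D_S\cap\Delta_I$ is the divisor $\{{\bf x}_0\in D\}$ exactly when $S\subset I$ (so $S'=\{0\}$), while in all other cases $|S'|\ge 2$; moreover every $D_{S'}$ with $|S'|\ge 2$ arises in this way. Thus, via the collapsing isomorphism, the building set $\cG=\{D_S:|S|\ge 2\}$ restricts to the building set $\{D_{S'}:|S'|\ge 2\}$ of $X_D^{[k+1]}$ together with the single divisorial locus $\{{\bf x}_0\in D\}$.

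Next I would run the iterated blow-up of Theorem \ref{thm:order}, blowing up the transforms of the $D_S$ in an admissible order, and follow $\widetilde{\Delta_I}$ through the sequence. The engine is Lemma \ref{diffclean}: since $D_S$ and $\Delta_I$ meet cleanly, the proper transform of $\Delta_I$ under $\mathrm{Bl}_{D_S}$ is $\mathrm{Bl}_{D_S\cap\Delta_I}\Delta_I$, and the same persists at every later stage because clean intersections are preserved under the blow-ups of the construction (Definition/Theorem \ref{thm:one-blow}). Hence $\widetilde{\Delta_I}$ evolves by successively blowing up the transforms of the loci $D_{S'}$ found above, and three effects have to be accounted for: (i) when $S\subset I$ the center restricts to the divisor $\{{\bf x}_0\in D\}$, so the induced blow-up of $\widetilde{\Delta_I}$ is an isomorphism; (ii) distinct $S$ may share the same $S'$, but once $D_{S'}$ has first been blown up, the traces on $\widetilde{\Delta_I}$ of the later centers mapping to $S'$ are divisors, so every subsequent such blow-up is again an isomorphism; and (iii) the genuinely nontrivial centers are exactly the transforms of the $D_{S'}$ with $|S'|\ge 2$. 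Comparing with the construction of $X_D^{[k+1]}$ then yields the isomorphism $\overline{\Delta_I}\cong X_D^{[k+1]}=X_D^{[|I^c|+1]}$.

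The main obstacle is the bookkeeping in step (iii): I must check that the nontrivial centers $D_{S'}$ are encountered in an order that is admissible for $X_D^{[k+1]}$, i.e.\ one satisfying condition $(*)$ of Theorem \ref{thm:order}. Here I would invoke the order-independence of the wonderful model (Theorem \ref{thm:existance}): both $X_D^{[n]}$ and $X_D^{[k+1]}$ are independent of the admissible order chosen, so I am free to pick an order on $\cG$ refining the inclusion order on the associated subsets $S'$, whose restriction to the effective centers is then admissible for $X_D^{[k+1]}$. With cleanness preserved (the first reduction) and the order arranged (the second), the non-isomorphism blow-ups restricted to $\Delta_I$ reproduce verbatim the iterated construction of $X_D^{[k+1]}$, completing the proof.
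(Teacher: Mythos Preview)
Your plan is essentially the paper's own argument: identify $\Delta_I\cong X^{|I^c|+1}$, compute the traces $D_S\cap\Delta_I=D_{S'}$, and show that only one blow-up per $S'$ actually modifies $\widetilde{\Delta_I}$. The paper executes the last step by fixing a specific admissible order (first the $D_S$ with $S\subset I^c$, then the second kind with $(|S'|,|S''|)$ lexicographically non-increasing) and observing that, once $D_{S'\cap I^c}\times D^{|I|}$ has been blown up, the proper transforms of the remaining $D_{S'\cap I^c}\times D_{S''}$ with $S''\subsetneq I$ are \emph{separated} from $\widetilde{\Delta_I}$; you arrive at the same conclusion through the clean-intersection/proper-transform formula together with order-independence.

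Two small points to tighten. In your item (ii) the later traces are not divisors but in fact empty: when $T\subsetneq S_{\max}$ the later center $\widetilde{D_T}$ is a proper (not total) transform, and it becomes disjoint from $\widetilde{\Delta_I}$ after the blow-up of $D_{S_{\max}}$---this disjointness is exactly the separation the paper states. Also, your appeal to Definition/Theorem~\ref{thm:one-blow} for preservation of cleanness is not literally applicable, since $\Delta_I$ does not belong to the arrangement $\cS=\{D_S\}$; one must either enlarge the arrangement to include $\Delta_I$ and the intersections $D_S\cap\Delta_I$ before invoking that result, or verify the separation directly in coordinates as the paper (implicitly) does.
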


\begin{proof}
We need to know which blow ups of $D_S$ have an effect to $\Delta_I$ in a specific order of blow ups.
We can assume that $I= \{ l, \cdots, n \}$ by arranging the order and denote $a= |I^c|$ and $b= |I|$.  We will denote $\Delta_I$ by $ X^a \times \Delta ( \cong X^{|I^c|+1})$. 
Then we have two different kinds of $D_S$.  The first one is that $S \subset I^c$, which we call  the first kind, the second one is that $S \nsubseteq I^c$, which we call the second kind.  We will change the order of blow ups so that we first blow up along $D_S$ of the first kind, and then along the second kind.  More precisely, we order $D_{I^c} \times X^b, D_{1, \cdots, \hat{i}, \cdots, l} \times X^b, \cdots , D_{i,j} \times X^b (i, j \in \{ 1, \cdots, a \})$ and then $D_{I^c} \times D^b, \cdots, D_{S'} \times D_{S''}, \cdots  (|S''|>0 \ \text{and} \ (|S'|, |S''|): \  \text{non-increasing in lexicographical order } )$.  
This order satisfies $(*)$-condition in definition/theorem \ref{thm:one-blow}, so that we can blow up in this order.
 
In this order of blow ups, notice that $\widetilde{X^{a} \times \Delta }$ and $\widetilde{D_{S'} \times D_{S''}}$ for $S'' \subsetneq I$ are separated when we blow up  along $\widetilde{D_{S'} \times D^b}$.  Thus we can forget the process of blow ups by $\widetilde{D_{S'} \times D_{S''}}$ where $S'' \subsetneq I $  i.e. we only need to care about $D_{S'} \times D^b$ for the second kind.   Under the isomorphism $X^a \times \Delta \cong X^{a+1}$, they are just $D_{S'} \times D$.

\end{proof}

We can also apply the same technique to polydiagonals term by term.  

 Thus we can go further from proposition \ref{n-point-chow}.

\begin{Thm}
We have the Chow group and motive decompositions
$$ A^*(X_D [n]) = \bigoplus_{\cN} \bigoplus_{\overrightarrow{\mu} \in M_{\cN} }
(\bigoplus_{\cCH} \bigoplus_{\overrightarrow{\lambda} \in M_{\cCH} }
A^{*- \| \overrightarrow{\mu} \|- \| \overrightarrow{\lambda} \| } (D_{S_{\cCH}})),$$
$$ h(X_D [n]) = \bigoplus_{\cN} \bigoplus_{\overrightarrow{\mu} \in M_{\cN} }
(\bigoplus_{\cCH} \bigoplus_{\overrightarrow{\lambda} \in M_{\cCH} }
h(D_{S_{\cCH}}){( \| \overrightarrow{\mu} \|+ \| \overrightarrow{\lambda} \|) }) ,$$
where $\cN$ runs through all the nest of $\cN$ and $\cCH$ runs through all the chains of $c(\cN)$.

\end{Thm}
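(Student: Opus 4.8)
The plan is to combine the two Chow group/motive decompositions already established in this section—Proposition \ref{n-point-chow} for $X_D[n]$ in terms of nests $\cN$ and the blow-up strata $\overline{\Delta_{\cN}}$, and the earlier theorem for $X_D^{[n]}$ in terms of chains $\cCH$—by refining the coarse summands $A^{*-\|\overrightarrow{\mu}\|}(\overline{\Delta_{\cN}})$ into finer pieces indexed by chains. The key structural input is Proposition \ref{deltaiso}, which identifies $\overline{\Delta_I} \cong X_D^{[|I^c|+1]}$; the remark immediately following it asserts that the same technique applies to an arbitrary polydiagonal term by term, so that for a general nest $\cN$ the stratum $\overline{\Delta_{\cN}}$ is itself (a product of spaces each) of the form $X_D^{[\,\cdot\,]}$, one factor for each connected component of the forest associated to $\cN$. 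This is precisely why the chains $\cCH$ in the final statement are said to range over the chains of $c(\cN)$, i.e. over the component structure recording how the $D_S$-blow-ups interact with each collapsed diagonal.

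First I would make the reduction in Proposition \ref{deltaiso} precise for a general nest, not just a single $\Delta_I$: carefully choosing the blow-up order so that at each stage the $(*)$-condition of Definition/Theorem \ref{thm:one-blow} holds, and checking that the blow-ups along the $D_S$ of the ``second kind'' which meet the diagonal locus separate off (as in the single-$\Delta_I$ argument) and therefore do not disturb the remaining strata. Under the identification $X^a \times \Delta \cong X^{a+1}$ this exhibits $\overline{\Delta_{\cN}}$ as a relative Fulton--MacPherson-type space built from fewer points, one collapsed coordinate per tree-component. Second I would apply the $X_D^{[\,\cdot\,]}$ decomposition theorem to each such factor, producing exactly the inner direct sum $\bigoplus_{\cCH}\bigoplus_{\overrightarrow{\lambda} \in M_{\cCH}} A^{*-\|\overrightarrow{\lambda}\|}(D_{S_{\cCH}})$, where $\cCH$ runs over chains of $c(\cN)$ and $S_{\cCH}$ is the maximal subset in the chain. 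Third I would substitute this refinement into Proposition \ref{n-point-chow}, adding the twists $\|\overrightarrow{\mu}\|$ and $\|\overrightarrow{\lambda}\|$ coming respectively from the diagonal-blow-ups and the $D$-blow-ups; since each decomposition in Theorem \ref{ChowY} is functorial in $Y$ and compatible with gradings, the total grading shift is the sum, giving the stated formula for both $A^*$ and the motive $h$.

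The main obstacle will be making the ``term by term'' remark after Proposition \ref{deltaiso} fully rigorous for nests with several nested or parallel blocks: one must verify that the reordering of blow-ups that works for one $\Delta_I$ can be performed simultaneously and compatibly for an entire nest, and that the separation phenomenon (the proper transforms becoming disjoint once the relevant diagonal center has been blown up) still holds when multiple diagonals are present. Equivalently, I must check that the building-set/nest combinatorics of the $D_S$ restricted to $\overline{\Delta_{\cN}}$ reorganizes exactly into the chain combinatorics on the $c(\cN)$ components, with no cross-terms between distinct components and no residual contribution from the $D_S$ that have already been absorbed into the diagonal blow-ups. Once this compatibility of the two arrangements is established, assembling the final decomposition is a formal application of the preceding two theorems, and the motive statement follows verbatim by the same bookkeeping since Theorem \ref{ChowY} supplies both decompositions in parallel.
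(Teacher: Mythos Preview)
Your proposal is correct and follows essentially the same route as the paper: the paper's argument consists precisely of combining Proposition~\ref{n-point-chow} with the identification of Proposition~\ref{deltaiso}, extended ``term by term'' to polydiagonals so that each $\overline{\Delta_{\cN}}$ becomes a space of type $X_D^{[c(\cN)]}$, and then invoking the earlier decomposition theorem for $X_D^{[\,\cdot\,]}$. In fact your outline is more detailed than what the paper provides, since the paper simply asserts the term-by-term extension without spelling out the compatibility of blow-up orders that you flag as the main point to check.
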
 

\section{Chow rings}
\label{rings}
In this section we assume that $X$ has a cellular decomposition
and $D$ is a smooth divisor of $X$ such that $D \hookrightarrow X$
is a Lefshetz embedding.  The reason we assume these conditions is that we need a Kunneth decomposition and S. Keel's formula for intersection ring of blow-up.



\subsection{Chow ring of $X_D^{ [n]}$}

Note that $D_S \hookrightarrow D_{S'}$
for $S \supset S'$ and $D_S \hookrightarrow X^n$ are Lefshetz
embedding.

Obviously, the arrangement $\cA$ is regular, so we can apply
theorem \ref{chowring}.

\begin{Thm}\label{ringD}
The Chow ring $A^*(X_D^{ [n]})$ is isomorphic to the polynomial
ring $ A^*(X^n) [x_S]$ modulo the ideal generated by

\begin{enumerate}

\item $x_{S} \cdot x_{T}$ for  $ S, T $ that overlap,

\item $\cJ_{D_{S}/X^n} \cdot x_{S}$ for all $S$,

\item $P_{D_{S}/X^n} (-\Sigma_{S' \supset S} x_{S'})$ for
all $S$.
\end{enumerate}

\end{Thm}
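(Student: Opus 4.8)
The plan is to obtain this as a direct application of Theorem \ref{chowring} to the pair $Y = X^n$ and $\cS = \cG = \cA = \{ D_S : S \subseteq N,\ |S| \geq 2\}$, since by construction $X_D^{[n]} = (X^n)_{\cA}$. The whole argument then reduces to (i) checking that $\cA$ satisfies the hypotheses of Theorem \ref{chowring} and (ii) transcribing its three families of generators through the inclusion-reversing dictionary $S \mapsto D_S$, under which $D_{S'} \subseteq D_S$ if and only if $S' \supseteq S$.

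First I would verify that $\cA$ is a regular simple arrangement. That it is a simple arrangement was already noted in Section \ref{construction}, and regularity is a short combinatorial check: given $D_L \subsetneq D_K$, i.e. $K \subsetneq L$, pick any $k_0 \in K$ and set $J := (L \setminus K) \cup \{k_0\}$; then $|J| \geq 2$, $J \subsetneq L$ (so $D_J \supsetneq D_L$), and $D_K \cap D_J = D_{K \cup J} = D_L$, which is exactly the defining property. Next I would check the Lefschetz hypotheses, namely that $D_S \hookrightarrow X^n$ and $D_S \hookrightarrow D_{S'}$ (for $D_S \subseteq D_{S'}$) are Lefschetz embeddings; this is the only step that genuinely uses the standing assumptions of the section. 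Since $X$ carries a cellular decomposition, $A^*(X^n)$ satisfies the K\"unneth formula, and because $D_S \cong D^{|S|} \times X^{n - |S|}$ the restriction $A^*(X^n) \to A^*(D_S)$ is a tensor product of identity maps and copies of the surjection $A^*(X) \to A^*(D)$ coming from the Lefschetz embedding $D \hookrightarrow X$; hence it is surjective, and the same computation applies to $D_S \hookrightarrow D_{S'}$.

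With the hypotheses in place, Theorem \ref{chowring} identifies $A^*(X_D^{[n]})$ with $A^*(X^n)[x_S]$ (where $x_S$ is the exceptional divisor class over $D_S$) modulo the three stated relations, and it remains only to rewrite them. Relation (1) of Theorem \ref{chowring} concerns incomparable subvarieties $D_S, D_T$, which happens exactly when neither $S$ nor $T$ contains the other; this is the meaning of ``$S, T$ overlap'' and yields our relation (1). Relation (2) transcribes verbatim as $\cJ_{D_S/X^n}\cdot x_S$. For relation (3) the only point requiring care --- and the place where the order reversal must be tracked --- is that the sum $\sum_{S_j \subseteq S_i} x_{S_j}$ over subvarieties contained in $D_S$ becomes a sum over index sets $S' \supseteq S$ (including $S' = S$), so that $P_{S_i/Y}(-\sum_{S_j \subseteq S_i} x_{S_j})$ turns into $P_{D_S/X^n}(-\sum_{S' \supseteq S} x_{S'})$. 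There is no serious obstacle beyond this bookkeeping and the K\"unneth-based Lefschetz verification; the substantive content is already packaged in Theorem \ref{chowring}.
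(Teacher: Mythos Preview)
Your proposal is correct and follows exactly the paper's approach: the paper simply notes (in the two sentences immediately preceding the theorem) that the inclusions $D_S \hookrightarrow D_{S'}$ and $D_S \hookrightarrow X^n$ are Lefschetz and that $\cA$ is obviously regular, then invokes Theorem~\ref{chowring}. You have merely supplied the details the paper leaves implicit --- the explicit regularity check via $J=(L\setminus K)\cup\{k_0\}$, the K\"unneth argument for the Lefschetz condition, and the order-reversing transcription $D_{S'}\subseteq D_S \Leftrightarrow S'\supseteq S$ --- so there is nothing to correct.
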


\subsection{Chow ring of $X_D [n]$}
We will compute the Chow ring of $X_D [n]$ from $X_D^{[n]}$ by a sequence of blow ups along, which is same as \cite{FM},

$$\Delta_{\{ 1,2 \}}, \Delta_{ \{ 1,2,3 \}}, \Delta_{ \{1,3 \}}, \Delta_{ \{2,3\}}, \cdots, \Delta_{ \{ 1, \cdots, n \} }, \cdots, \Delta_{ \{1, n\}}, \cdots, \Delta_{ \{n-1, n\} }. $$

Let 

$$Y^{[i]}_i \rightarrow \cdots \rightarrow Y^{[i]}_{k+1} \rightarrow Y^{[i]}_k \rightarrow \cdots \rightarrow Y^{[i]}_0$$

be a part of the above sequence of blow-ups along  $$\Delta_{ \{1, \cdots, i+1 \}}, \cdots \Delta_{ \{ 1, \cdots, i-k-1, i +1\}}, \cdots \Delta_{ \{k,\cdots  , i ,i+1\}}, \cdots, \Delta_{\{1,i+1\}}, \cdots, \Delta_{\{i, i+1\}}.$$  
Note $1 \leq i \leq n-1$.

We will compute Chow rings of $Y^{[i]}_k$'s inductively by using theorem \ref{blowchow}.

\begin{Lemma}\label{disnon}
If $I'$ and $J'$ are subsets of $\{ 1, \cdots, i, i+1\}$ that overlap, then $\widetilde{\Delta_{I'}}$ and $\widetilde{\Delta_{J'}}$ are disjoint at $Y^{[i]}_k$, except, up to the order of $I'$ and $J'$, in exactly the following cases:
\begin{enumerate}
\item $I' =I \subset \{ 1, \cdots, i \}, |I| \leq i-k,  J'=J^+$, with $J \subset I$,

\item $I' = I^+, J' = J^+,$ with $I \cap J = \emptyset , |I\cup J| \leq i-k$
\end{enumerate}
\end{Lemma}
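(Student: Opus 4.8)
The plan is to reduce everything to one separation principle together with bookkeeping of the blow-up order. Since $I'$ and $J'$ overlap they share an index, so forcing the points of $I'$ and of $J'$ to collide forces all of $I'\cup J'$ to collide; hence $\Delta_{I'}\cap\Delta_{J'}=\Delta_{I'\cup J'}$ (Lemma~\ref{poly}), and the two polydiagonals meet cleanly. The geometric input I would use is the standard normal-cone fact: if two subvarieties meet cleanly along $Z$ and we blow up $Z$, then their proper transforms become disjoint, because cleanness gives $N_{Z/\Delta_{I'}}\cap N_{Z/\Delta_{J'}}=0$ inside $N_{Z/Y}$, so the two projectivized cones in the exceptional divisor do not meet. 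Conversely, as long as the transform of $\Delta_{I'\cup J'}$ survives it is contained in $\widetilde{\Delta_{I'}}\cap\widetilde{\Delta_{J'}}$, so the transforms still meet. Thus $\widetilde{\Delta_{I'}}$ and $\widetilde{\Delta_{J'}}$ separate \emph{exactly} when $\Delta_{I'\cup J'}$ is blown up, provided nothing separates them earlier.

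The organizing dichotomy is whether the finite parts $\bar I':=I'\setminus\{i+1\}$ and $\bar J':=J'\setminus\{i+1\}$ already overlap inside $\{1,\dots,i\}$. First I would dispose of the overlapping case: every polydiagonal supported on $\{1,\dots,i\}$ is blown up by the stage $Y^{[i]}_0$, so by the separation principle $\widetilde{\Delta_{\bar I'}}\cap\widetilde{\Delta_{\bar J'}}=\emptyset$ already at $Y^{[i]}_0$; since $\Delta_{I'}\subseteq\Delta_{\bar I'}$ and $\Delta_{J'}\subseteq\Delta_{\bar J'}$, the transforms are contained in these and hence disjoint at $Y^{[i]}_0$ and at every later $Y^{[i]}_k$. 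This removes all overlapping pairs whose finite parts also overlap.

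It then remains to classify the overlapping $I',J'$ whose finite parts do \emph{not} overlap, i.e. are nested or disjoint. A short combinatorial check, according to whether $i+1$ lies in neither, one, or both of $I',J'$, shows that overlap of $I',J'$ together with non-overlap of $\bar I',\bar J'$ forces exactly the two listed configurations: either $I'=I$ and $J'=J^+$ with $\emptyset\neq J\subsetneq I$, or $I'=I^+$ and $J'=J^+$ with $I\cap J=\emptyset$ and $I,J\neq\emptyset$. In these two cases $\Delta_{I'}\cap\Delta_{J'}$ equals $\Delta_{I^+}$, respectively $\Delta_{(I\cup J)^+}$, a polydiagonal whose index set contains $i+1$; so the separating center belongs to the point-$(i+1)$ subsequence $Y^{[i]}_i\to\cdots\to Y^{[i]}_0$ and is not among the centers already treated in $Y^{[i]}_0$. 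Reading off the displayed order, where the centers involving $i+1$ are blown up in order of decreasing size, one checks that at $Y^{[i]}_k$ precisely those $\Delta_{M^+}$ with $|M|\geq i-k+1$ have been blown up. Applying this with $M=I$ and with $M=I\cup J$ turns ``the separating center is still unblown'' into exactly the conditions $|I|\leq i-k$ and $|I\cup J|\leq i-k$, which is the assertion.

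The step I expect to be the main obstacle is the converse within these two exceptional configurations: showing that the centers blown up before $\Delta_{I'\cup J'}$ do \emph{not} prematurely separate $\widetilde{\Delta_{I'}}$ and $\widetilde{\Delta_{J'}}$. The delicate point is that a larger diagonal such as $\Delta_{I\cup J}$ (blown up already in $Y^{[i]}_0$) contains the intersection locus $\Delta_{(I\cup J)^+}$ yet is transverse to $\Delta_{I^+}$ and $\Delta_{J^+}$ in complementary directions; a direct normal-cone computation shows that both transforms acquire the \emph{entire} exceptional fibre over $\Delta_{(I\cup J)^+}$, so they continue to meet there. Checking that the $D_S$ (which meet every polydiagonal only cleanly and never contain a full diagonal) likewise cannot separate two diagonals, and assembling these local computations with the precise blow-up order, is where the real work lies; everything else is formal.
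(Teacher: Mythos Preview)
Your approach is correct but differs from the paper's in a substantive way.  You work in the given blow-up order and argue directly with normal cones: the separation principle plus the combinatorial reduction to $\bar I',\bar J'$ handles the ``disjoint'' direction, while survival of $\widetilde{\Delta_{I'\cup J'}}$ handles the exceptional cases.  The paper instead \emph{reorders} the blow-ups (legitimately, via Theorem~\ref{thm:order}) as $D_S,\Delta_I;D_{S^+},\Delta_{I^+};D_{S^{++}}$ so that after the first block the ambient space is literally $X_D[i]\times X^{n-i}$; then the disjointness of overlapping diagonals supported in $\{1,\dots,i\}$, and the graph description of $\widetilde{\Delta_{I^+}}$ over $\overline{\Delta_I}$, are read off directly from Theorem~\ref{properties} for $X_D[i]$.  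This sidesteps entirely the two places where your argument has to work hardest: the containment $\widetilde{\Delta_{I'}}\subseteq\widetilde{\Delta_{\bar I'}}$ (which is not automatic once $\widetilde{\Delta_{\bar I'}}$ is an exceptional divisor and $\widetilde{\Delta_{I^+}}$ has passed through a total-transform step), and the ``no premature separation'' check you flag as the main obstacle.  Your route is more self-contained and makes the geometry of the separation explicit; the paper's route is shorter because it cashes in the structural results already proved for the inductively smaller space $X_D[i]$.
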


\begin{proof}
We change the order of blow ups in the following way;

$$D_S, \Delta_I; D_{S^+}, \Delta_{I^+}; D_{S^{++}},$$
where $S, I \subset \{1, \cdots, i\}, |I^+| \leq i-k+2$ and $S^{++} \nsubseteq  \{ 1, \cdots, i, i+1\}$.
After blowing up along $D_S, \Delta_I$, the space is $X_D[i] \times X^{(n-i)}$. If $I', J' \subset \{ 1, \cdots, i\}$, then $\widetilde{\Delta_{I'}}$ and $\widetilde{\Delta_{J'}}$ are disjoint by theorem \ref{properties}.

For $I'=I^+, J'=J^+$, $\widetilde{\Delta_{I^+}}$ is a product of the graph of $p_a: \overline{\Delta_{I}} \rightarrow X$ and $X^{n-i-1}$ where $a \in I$ and we use a convention $\Delta_{a} = X^{n}$.  Same for $\widetilde{\Delta_{J^+}}$ .  To have non-empty intersection, $I$ and $J$ must be nested by theorem \ref{properties}.  But we have an assumption that $I^+$ and $J^+$ overlap, so that $I$ and $J$ must be disjoint. $\widetilde{\Delta_{I^+}}$ and $\widetilde{\Delta_{J^+}}$ will be separted after blowing up along $\widetilde{\Delta_{(I \cup J)^+}}$.

Now let's move to the case that $I'=I \subset \{ 1, \cdots, i \}$ and $J' =J^+$.  In this case, $\widetilde{\Delta_I} = \overline{\Delta_I} \times X^{n-i}$.  To have non-empty intersectoin, $I$ and $J $ are nested, i.e. $J \subset I$ or $I \subset J$.  But the latter case $I' \subset J'$, which contradict to the assumption of overlapping.  Thus $J \subset I$. $\widetilde{\Delta_{I}}$ and $\widetilde{\Delta_{J^+}}$ will be separted after blowing up along $\widetilde{\Delta_{I^+}}$.

Note that $D_S$ and $\Delta_I$ are intersecting cleanly and its intersection is a proper subset of $\Delta_I$.
\end{proof}

\begin{Lemma}\label{trans+}
For $a \in I \subset \{1, \cdots, i\}$ such that $ 2 \leq |I| \leq i-k$, then at $Y^{[i]}_k$,
$$\widetilde{\Delta_{I^+}}= \widetilde{\Delta_I} \cap \widetilde{\Delta_{a^+}}.$$
\end{Lemma}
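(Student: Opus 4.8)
The plan is to reduce the statement to the scheme-theoretic identity $\Delta_I \cap \Delta_{a^+} = \Delta_{I^+}$ in $X^n$ and then to show that this clean intersection survives, as an intersection of proper transforms, all the way up to $Y^{[i]}_k$. The base identity is immediate: forcing the coordinates indexed by $I$ to coincide and simultaneously imposing $x_a = x_{i+1}$ forces all coordinates indexed by $I^+ = I \cup \{i+1\}$ to coincide, since $a \in I$. Thus $\Delta_I \wedge \Delta_{a^+} = \Delta_{I^+}$ and the intersection is clean, so by Lemma~\ref{poly} we already have $\widetilde{\Delta_I} \cap \widetilde{\Delta_{a^+}} = \widetilde{\Delta_{I^+}}$ in $X_D^{[n]}$, before any polydiagonal has been blown up.

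The key structural remark is that $I$ and $a^+$ \emph{overlap}, their intersection $\{a\}$ being a proper nonempty subset of each (this is where $|I| \ge 2$ is used). Hence the proper transforms $\widetilde{\Delta_I}$ and $\widetilde{\Delta_{a^+}}$ are separated by exactly one blow-up, the one along their common intersection $\Delta_{I^+}$; this is the same separation mechanism recorded in the proof of Lemma~\ref{disnon} and reflected in Theorem~\ref{properties}(3). Since $|I| \le i-k$, the center $\Delta_{I^+}$, which has $|I^+| = |I|+1 \le i-k+1$, occurs in the Fulton--MacPherson order strictly after stage $Y^{[i]}_k$, so at $Y^{[i]}_k$ the two proper transforms still meet. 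I would then propagate the clean identity of the first paragraph through the intervening blow-ups, reordering them by the $(*)$-condition and Theorem~\ref{thm:order} exactly as in the proof of Lemma~\ref{disnon}: first the $D_S$ and the $\Delta_C$ with $C \subset \{1,\dots,i\}$, then the $\Delta_{S^+}$ with $|S| \ge i-k+1$. At each step I invoke Definition/Theorem~\ref{thm:one-blow}, which guarantees that the blow-up preserves the arrangement, hence the clean intersection, structure; so long as the center is not $\Delta_{I^+}$ itself, the proper transforms of $\Delta_I$ and $\Delta_{a^+}$ continue to meet cleanly along the proper transform of $\Delta_{I^+}$.

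Concretely, the $D_S$ meet every polydiagonal cleanly by Lemma~\ref{diffclean} and leave the identity intact, while inside $X_D[i]$ the transform $\widetilde{\Delta_I}$ becomes the boundary divisor and $\widetilde{\Delta_{a^+}}$ the graph section meeting it along $\widetilde{\Delta_{I^+}}$, recovering the identity at $Y^{[i]}_0$. The main obstacle is the last family $\Delta_{S^+}$ with $|S| \ge i-k+1$: here I would split into the case $S \supsetneq I$, where $\Delta_{S^+} \subsetneq \Delta_{I^+}$ so that the blow-up takes place inside the intersection locus and modifies $\widetilde{\Delta_{I^+}}$ without separating $\widetilde{\Delta_I}$ from $\widetilde{\Delta_{a^+}}$, and the case where $S^+$ is incomparable to $I^+$ (including $S \cap I = \emptyset$), where the relevant loci meet in lower dimension and the clean structure is preserved. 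I expect each case to reduce, after a further local reordering in the style of Lemma~\ref{disnon}, to the single-blow-up picture of Definition/Theorem~\ref{thm:one-blow}, where preservation is immediate; this transports $\Delta_I \cap \Delta_{a^+} = \Delta_{I^+}$ to $Y^{[i]}_k$ and yields $\widetilde{\Delta_I} \cap \widetilde{\Delta_{a^+}} = \widetilde{\Delta_{I^+}}$.
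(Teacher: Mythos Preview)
Your proposal is correct and follows the same line the paper indicates: the paper's proof reads in its entirety ``Proof is very similar to proposition~\ref{poly},'' and your argument is precisely an unpacking of that reference --- start from the clean identity $\Delta_I \cap \Delta_{a^+} = \Delta_{I^+}$ in $X^n$, carry it through the $D_S$-blow-ups via Lemma~\ref{poly}, and then through the earlier diagonal blow-ups by the same reordering and single-step preservation argument used for Lemma~\ref{disnon}. You have simply written out what the paper leaves as a pointer.
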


\begin{proof}

Proof is very similar to proposition \ref{poly}.

\end{proof}

\begin{Lemma}\label{blowdiv}
If $\widetilde{\Delta_{I'}}$ is a divisor in $Y^{[i]}_k$, then the inverse image $\pi^*(\widetilde{\Delta_{I'}})$ in  $Y^{[i]}_k$ is the divisor $\widetilde{\Delta_{I'}}$, except cases such that $I'=J \subset \{1, \cdots, i\}$ with $|J| =i-k$ and in that case
$$ \pi^*(\widetilde{\Delta_{J}}) =\widetilde{\Delta_J} +\widetilde{\Delta_{J^+}}.$$
\end{Lemma}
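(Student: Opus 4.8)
The plan is to read the pullback off the standard blow-up formula, once I pin down which stage of the level-$i$ sequence the map $\pi$ represents. Within this sequence the diagonals $\Delta_{L^+}$ (with $L \subseteq \{1,\dots,i\}$) are blown up in order of decreasing $|L|$, so at $Y^{[i]}_k$ exactly the $\widetilde{\Delta_{L^+}}$ with $|L| \ge i-k+1$ have been turned into divisors, and $\pi \colon Y^{[i]}_{k+1} \to Y^{[i]}_k$ is the blow-up along the centers $\widetilde{\Delta_{L^+}}$ with $|L| = i-k$. These centers are pairwise disjoint at $Y^{[i]}_k$: for $L \ne L'$ of the same size $i-k$ the sets $L^+, L'^+$ overlap, so Lemma~\ref{disnon} forces disjointness unless we are in its case (2), which would require $L \cap L' = \emptyset$ and $|L \cup L'| \le i-k$, impossible since $|L \cup L'| = 2(i-k) > i-k$. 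Hence I may work in the setting of Lemma~\ref{blowchow}, where the exceptional divisors are precisely the $\widetilde{\Delta_{L^+}}$.

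Since each $\widetilde{\Delta_{I'}}$ is a smooth divisor, the general blow-up formula gives
$$\pi^*(\widetilde{\Delta_{I'}}) = \widetilde{\Delta_{I'}} + \sum_{|L|=i-k} m_L \, \widetilde{\Delta_{L^+}},$$
with $m_L = 1$ when the center $\widetilde{\Delta_{L^+}}$ is contained in $\widetilde{\Delta_{I'}}$ and $m_L = 0$ otherwise (a smooth divisor has multiplicity one along any smooth subvariety it contains). So the statement reduces to deciding which centers lie in $\widetilde{\Delta_{I'}}$, and a necessary set-theoretic condition is $\Delta_{L^+} \subseteq \Delta_{I'}$, i.e. $I' \subseteq L^+$. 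I would split on whether $i+1 \in I'$. If $i+1 \in I'$, write $I' = M^+$; such a divisor survives at $Y^{[i]}_k$ only for $|M| \ge i-k+1$, and $I' \subseteq L^+$ would force $M \subseteq L$ with $|M| > |L|$, which is impossible, so no center is contained and $\pi^*(\widetilde{\Delta_{I'}}) = \widetilde{\Delta_{I'}}$. If $i+1 \notin I'$ then $I' \subseteq L^+$ is equivalent to $I' \subseteq L$, which already forces $|I'| \le i-k$. When $|I'| = i-k$ the containment forces $L = I'$, and Lemma~\ref{trans+} (with $I = I' = J$, valid since $2 \le |J| \le i-k$) gives $\widetilde{\Delta_{J^+}} = \widetilde{\Delta_J} \cap \widetilde{\Delta_{a^+}} \subseteq \widetilde{\Delta_J}$, so $m_J = 1$ and this single center is picked up: $\pi^*(\widetilde{\Delta_J}) = \widetilde{\Delta_J} + \widetilde{\Delta_{J^+}}$, the asserted exception.

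The only remaining, and genuinely subtle, case is $i+1 \notin I'$ with $|I'| < i-k$, where several $L \supsetneq I'$ satisfy the set-theoretic containment $\Delta_{L^+} \subseteq \Delta_{I'}$; here I must show that the proper transforms nonetheless fail to contain the centers, so that every $m_L = 0$ and the pullback is again just the proper transform. The idea is that although $\Delta_{L^+} \subseteq \Delta_{I'}$ before blowing up, the earlier-level blow-ups have already detached the two: $\widetilde{\Delta_{I'}}$ and $\widetilde{\Delta_L}$ are distinct boundary divisors meeting transversally in normal crossings (Theorem~\ref{properties}), so $\widetilde{\Delta_L} \not\subseteq \widetilde{\Delta_{I'}}$. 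Choosing $a \in L \setminus I'$ and writing $\widetilde{\Delta_{L^+}} = \widetilde{\Delta_L} \cap \widetilde{\Delta_{a^+}}$ via Lemma~\ref{trans+}, the index sets $a^+$ and $I'$ are disjoint, hence $\widetilde{\Delta_{a^+}}$ is transverse to $\widetilde{\Delta_{I'}}$ and the irreducible center $\widetilde{\Delta_{L^+}}$ is not swallowed by $\widetilde{\Delta_{I'}}$. I expect this separation argument to be the main obstacle, and the precise point where the normal-crossing and nest structure of Theorem~\ref{properties} together with the disjointness analysis of Lemma~\ref{disnon} do the real work; everything else is bookkeeping through the case distinctions above.
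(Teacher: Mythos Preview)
Your overall architecture is correct and matches the paper's: use the blow-up pullback formula for a divisor, then decide for each center $\widetilde{\Delta_{L^+}}$ (with $|L|=i-k$) whether it sits inside $\widetilde{\Delta_{I'}}$. Your treatment of the case $i+1\in I'$ and of the exceptional case $I'=J$ with $|J|=i-k$ via Lemma~\ref{trans+} is exactly what the paper does.

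Where you diverge from the paper is in the ``subtle case'' $I'\subset\{1,\dots,i\}$ with $|I'|<i-k$. The paper does not argue via transversality at all; it simply invokes the product description of the ambient space (after reordering blow-ups as in the proof of Lemma~\ref{disnon}) as $X_D[i]\times X^{n-i}$. In that model $\widetilde{\Delta_{I'}}=\overline{\Delta_{I'}}\times X^{n-i}$, while $\widetilde{\Delta_{L^+}}$ is the graph of $p_a\colon \overline{\Delta_L}\to X$ times $X^{n-i-1}$; projecting to the $X_D[i]$ factor sends $\widetilde{\Delta_{L^+}}$ onto the boundary divisor $\overline{\Delta_L}$, which is not contained in the distinct boundary divisor $\overline{\Delta_{I'}}$. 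Non-containment is then immediate.

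Your transversality argument, as written, has a genuine gap. From $\widetilde{\Delta_L}\not\subseteq\widetilde{\Delta_{I'}}$ and $\widetilde{\Delta_{a^+}}$ transverse to $\widetilde{\Delta_{I'}}$ you cannot conclude $\widetilde{\Delta_L}\cap\widetilde{\Delta_{a^+}}\not\subseteq\widetilde{\Delta_{I'}}$: already in $\A^3$ take $A=\{z=0\}$, $B=\{y=0\}$ (transverse to $A$), $C=\{z=xy\}$ (not contained in $A$), and observe $B\cap C=\{y=z=0\}\subseteq A$. What is really needed is that $\widetilde{\Delta_{L^+}}$ \emph{dominates} $\widetilde{\Delta_L}$ under the projection forgetting the $(i{+}1)$-st point, which is precisely what the product picture supplies. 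If you want to keep your line of argument, replace the transversality step by this projection/domination observation; otherwise adopt the paper's one-line product-structure justification.
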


\begin{proof}
For the case described in the statement, by lemma \ref{trans+}, the statement is true.
For other cases, it is obvious that the disisor $\widetilde{\Delta_{I'}}$ does not contain any blow up center by considering the space $X_D^{[i]} \times X^{(n-i)}$.  
\end{proof}

For $a \in N$, let $p_a$ be the corresponding projection from $X^{n}$ to $X$, and for $a, b \in N$ (distinct), let $p_{a,b}$ be the projection from $X^{n}$ to $X^{a,b}$.  Let $[\Delta] \in A^{m}(X^{a,b})$ be the class of the diagonal, where $m = \text{dim} X$.  Define a polynomial $c_{a,b}(t) \in A^*(X^n)[t]$ be 
$$c_{a,b}(t) = \sum_{i=1}^m (-1)^i p_a^*(c_{m-i})t^i+ [ \Delta_{\{a,b\}}]$$
where $c_{m-i}$ is the $(m-i)$-th Chern class of $X$ and $[ \Delta_{\{a,b\}}] = p_{a,b}^* ( [\Delta]).$

Let's compute Chern polynomials and Lefshetz kernels of $\Delta$'s at the stage of $Y^{[1]}_0$.

\begin{Lemma}\label{0th}
\begin{enumerate}
\item\label{0lef} $\cJ_{ \Delta_{I}(Y^{[1]}_0)/ Y^{[1]}_0} = (\cJ_{\Delta_I / X^n}, x_S)$ where $S  \nsupseteq I$.
\item\label{0chern} $P_{ \Delta_{I}(Y^{[1]}_0)/ Y^{[1]}_0} (t) = P_{\Delta_I/ X^n}(t)$.
\end{enumerate}

\end{Lemma}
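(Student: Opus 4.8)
The plan is to read off both statements by running the iterated blow-up description of $Y^{[1]}_0 = X_D^{[n]}$ and applying Lemma \ref{lefker} one center at a time. By Theorem \ref{thm:order} I would present $X_D^{[n]}$ as the tower obtained from $X^n$ by blowing up the centers $D_S$ (with $|S|\ge 2$) in order of decreasing $|S|$, each blow-up being along a smooth center. Throughout the tower I track the single subvariety $U=\Delta_I$ together with its $\sim$-transform. The two hypotheses of Lemma \ref{lefker} hold at every stage, because $D_S$ and $\Delta_I$ meet cleanly (Lemma \ref{diffclean}) and, under the standing assumptions (cellular $X$, Lefschetz $D\hookrightarrow X$), all the inclusions $\Delta_I\hookrightarrow X^n$ and $D_S\hookrightarrow X^n$ are Lefschetz via the K\"unneth decomposition. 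The key structural observation, used repeatedly, is that $\Delta_I\nsubseteq D_S$ for every $S$ (a polydiagonal is never forced into the $D$-locus), so at each step the transform of $\Delta_I$ is a genuine proper transform and we are always in the ``$\mathrm{Bl}_V Y$'' situation of Lemma \ref{lefker}, never the ``$\mathrm{Bl}_Z Y$'' one.

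For part (2) this already finishes the argument: since every blow-up is of the form $\mathrm{Bl}_{\widetilde{D_S}}$ with $\widetilde{\Delta_I}$ the proper transform, clause (4) of Lemma \ref{lefker} applies verbatim at each stage and leaves the Chern polynomial unchanged, while clause (5) never intervenes. Composing over the whole tower gives $P_{\Delta_I(Y^{[1]}_0)/Y^{[1]}_0}(t)=P_{\Delta_I/X^n}(t)$ with no shift by exceptional divisors.

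For part (1) I would iterate clauses (1) and (2): the Lefschetz kernel is unchanged at the blow-up of $D_S$ when $\widetilde{\Delta_I}\cap\widetilde{D_S}\neq\emptyset$, and it acquires exactly the new generator $x_S$ (the class of the exceptional divisor) when $\widetilde{\Delta_I}$ and $\widetilde{D_S}$ have already become disjoint. Hence $\cJ_{\Delta_I(Y^{[1]}_0)/Y^{[1]}_0}$ is generated by the pullback of $\cJ_{\Delta_I/X^n}$ together with precisely those $x_S$ for which the two transforms are disjoint at the instant $D_S$ is blown up. Identifying that set of $S$ combinatorially is the heart of the matter.

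The hard part will therefore be the separation bookkeeping, which I would carry out exactly as in the proof of Proposition \ref{deltaiso}. Writing $S=S'\sqcup S''$ with $S'\subseteq I^c$ and $S''=S\cap I$, there are three cases. If $S''=\emptyset$, then on $\Delta_I\cong X^{|I^c|}\times\Delta$ the divisor $D_S$ restricts to the honest center $D_{S'}\times\Delta$, which survives as a boundary divisor of $\overline{\Delta_I}\cong X^{[|I^c|+1]}_D$; the transforms keep meeting and $x_S$ does not enter the kernel. If $S\supseteq I$ (so $S''=I$), the same computation shows $\widetilde{D_S}$ restricts to a boundary divisor of $\overline{\Delta_I}$ and again $x_S$ survives. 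In the remaining case $\emptyset\neq S''\subsetneq I$, reordering so that the larger center $D_{S'\cup I}$ is blown up first disjoins $\widetilde{\Delta_I}$ from $\widetilde{D_S}$ strictly before $D_S$ is reached (this is the separation step of Proposition \ref{deltaiso}), so clause (2) fires and $x_S$ joins the kernel. The delicate point to write out carefully is precisely that the disjoining in the third case happens before the blow-up of $D_S$, so that clause (2) rather than clause (1) governs that step. Collecting the three cases, the extra generators are exactly the $x_S$ with $S$ meeting $I$ in a nonempty proper subset, i.e. $\emptyset\neq S\cap I\subsetneq I$; note that the disjoint sets $S\cap I=\emptyset$ must be excluded, since such $x_S$ restrict to nonzero boundary classes of $\overline{\Delta_I}$, so the bare condition $S\nsupseteq I$ in the statement should be refined to ``$S$ meets but does not contain $I$.''
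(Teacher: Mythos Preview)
Your approach is the same as the paper's: both iterate Lemma~\ref{lefker} along the blow-up tower defining $Y^{[1]}_0=X_D^{[n]}$, invoking the separation analysis from the proof of Proposition~\ref{deltaiso} to decide, center by center, whether $\widetilde{\Delta_I}$ meets the current $\widetilde{D_S}$; and for part~(2) both simply apply clause~(4) of Lemma~\ref{lefker} at every step.

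For part~(1) your case analysis is in fact more careful than the paper's. The paper's proof asserts that $\widetilde{D_S}$ is disjoint from $\widetilde{\Delta_I}$ whenever $S\nsupseteq I$, but as you point out this fails when $S\cap I=\emptyset$: in that case $\widetilde{D_S}$ meets $\overline{\Delta_I}$ in one of the latter's own boundary divisors under the identification $\overline{\Delta_I}\cong X_D^{[|I^c|+1]}$ of Proposition~\ref{deltaiso} (and this is consistent with the nesting criterion of Theorem~\ref{properties}(3), which says $\{D_S,\Delta_I\}$ is nested when $S$ and $I$ are disjoint). Hence the class $x_S$ restricts nontrivially and cannot lie in the kernel. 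Your refinement---that the extra generators are exactly the $x_S$ with $\emptyset\neq S\cap I\subsetneq I$, i.e.\ with $S$ and $I$ overlapping---is the correct formulation, and the bare condition $S\nsupseteq I$ in the statement is too coarse; the same remark applies to Proposition~\ref{kthlef}(3) and to relation~(3) of the final theorem.
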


\begin{proof}
(\ref{0lef}) By the proof of proposition \ref{deltaiso}, we know that $\widetilde{D_S}$ for $S \nsupseteq I$ is disjoint from $\widetilde{\Delta_I}$, and others intersect cleanly and non-trivially.  By lemma \ref{lefker}, we have the statement.  

(\ref{0chern}) We know that $\widetilde{\Delta_I}$ is intersecting with $\widetilde{D_S}$ cleanly including the cases disjoint by the proof of proposition \ref{deltaiso}.  By lemma \ref{lefker}, we know that a Chern polynomial will not be changed.

\end{proof}

\begin{Prop}\label{kthchern}

\begin{enumerate}
\item For $a \in \{ 1, \cdots, i \}, 0 \leq k \leq i-1$, a Chern polynomial of $\widetilde{\Delta_{a^+}}$ at $Y^{[i]}_k$ is
$$c_{a, i+1}(-t +\sum_{ a, i+1 \in I'} D_k I').$$

\item For $ I \subset \{ 1, \cdots, i \}, 2 \leq |I| \leq i-k$, a Chern polynomial of $\widetilde{\Delta_{I^+}}$ at $Y^{[i]}_k$ is 
$$ (t+D_k I) \cdot c_{a, i+1}(-t+\sum_{I^+ \subset I'} D_kI')$$
for any $a \in I$.

\end{enumerate}
Here $D_k I$ is the divisor of $Y^{[i]}_k$ corresponding to $\Delta_I$.
\end{Prop}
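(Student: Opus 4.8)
The plan is to prove both parts simultaneously by induction on $k$, establishing part (1) first and then extracting part (2) at each fixed stage from part (1) together with the multiplicativity of Chern polynomials. Throughout I write $P^{(k)}_{a^+}(t)$ for the Chern polynomial of $\widetilde{\Delta_{a^+}}$ at $Y^{[i]}_k$, and I note that in both sums the divisors $D_k I'$ that occur are by definition the classes of the $\widetilde{\Delta_{I'}}$ that are \emph{already exceptional} at $Y^{[i]}_k$; since $i+1 \in I'$, this forces $I' = L^+$ with $L \subset \{1,\dots,i\}$ and $|L| \ge i-k+1$. The base case $k=0$ is the stage $Y^{[i]}_0 = X_D[i]\times X^{n-i}$, where no diagonal containing $i+1$ has yet been blown up, so the sum is empty. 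There $\widetilde{\Delta_{a^+}}=\Delta_{\{a,i+1\}}$ has normal bundle isomorphic to $p_a^*TX$, and it meets every earlier center ($D_S$ and $\Delta_J$ with $J\subset\{1,\dots,i\}$) cleanly; hence by the reasoning of Lemma \ref{0th}(\ref{0chern}) and Lemma \ref{lefker}(4) its Chern polynomial equals its value in $X^n$, namely $\sum_{j=1}^m p_a^*(c_{m-j})t^j + [\Delta_{\{a,i+1\}}] = c_{a,i+1}(-t)$, as claimed.

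For the inductive step $k\to k+1$ (with $k\le i-2$) the passage blows up simultaneously the centers $\widetilde{\Delta_{J^+}}$ for $|J|=i-k\ge 2$. These are pairwise disjoint: two distinct $J,J'$ of equal size $i-k$ are neither nested nor satisfy $|J\cup J'|\le i-k$, so neither exceptional case of Lemma \ref{disnon} applies. By Lemma \ref{trans+} we have $\widetilde{\Delta_{J^+}}\subset\widetilde{\Delta_{a^+}}$ exactly when $a\in J$, whereas for $a\notin J$ the sets $\widetilde{\Delta_{a^+}}$ and $\widetilde{\Delta_{J^+}}$ are disjoint by Lemma \ref{disnon}(2) (since $|\{a\}\cup J| = i-k+1 > i-k$). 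Iterating Lemma \ref{lefker}(5) over the disjoint subcenters contained in $\widetilde{\Delta_{a^+}}$ then shifts the Chern polynomial by $t\mapsto t-\sum_{a\in J,\,|J|=i-k} D_{k+1}(J^+)$. Every old divisor $D_k I'$ in the sum has $i+1\in I'$, hence $I'\nsubseteq\{1,\dots,i\}$, so Lemma \ref{blowdiv} gives $\pi^* D_k I' = D_{k+1}I'$; combining the pulled-back inductive hypothesis with the new exceptional divisors $D_{k+1}(J^+)$ — which are precisely the terms $I'=J^+$ with $|J|=i-k$ newly entering $\sum_{a,i+1\in I'}$ — produces $c_{a,i+1}(-t+\sum_{a,i+1\in I'} D_{k+1}I')$, completing part (1).

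For part (2), fix $I$ with $2\le|I|\le i-k$ and choose $a\in I$. At $Y^{[i]}_k$ the diagonal $\widetilde{\Delta_I}$ is already a divisor (blown up in an earlier block), so its Chern polynomial is $t+D_kI$; and by Lemma \ref{trans+}, $\widetilde{\Delta_{I^+}}=\widetilde{\Delta_I}\cap\widetilde{\Delta_{a^+}}$ transversally, the codimensions $1$ and $m$ adding to $m+1$. Multiplicativity of Chern polynomials under transversal intersection, together with part (1), gives $P_{\Delta_{I^+}}(t)=(t+D_kI)\cdot c_{a,i+1}(-t+\sum_{a,i+1\in I'}D_kI')$. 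It remains to replace $\sum_{a,i+1\in I'}$ by $\sum_{I^+\subset I'}$. The discarded terms are the $D_kI'$ with $a,i+1\in I'$ but $I^+\nsubseteq I'$, i.e. $I\nsubseteq L$ where $L=I'\setminus\{i+1\}$; since $|L|\ge i-k+1 > |I|$ this also forces $L\nsubseteq I$, so by Lemma \ref{disnon}(1) (we avoid its exceptional case $L\subset I$) the divisors $\widetilde{\Delta_I}$ and $\widetilde{\Delta_{I'}}$ are disjoint. Hence each discarded $D_kI'$ restricts to $0$ on $\widetilde{\Delta_{I^+}}\subset\widetilde{\Delta_I}$, so changing the argument of $c_{a,i+1}$ alters the product only by a class in $\cJ_{\Delta_{I^+}}[t]$. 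As a Chern polynomial is well-defined only modulo $\cJ_{\Delta_{I^+}}$, the asserted formula follows.

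The step I expect to be the main obstacle is the index-set simplification in part (2): one must confirm, through the disjointness dichotomy of Lemma \ref{disnon}(1), that \emph{every} divisor $I'$ appearing in $\sum_{a,i+1\in I'}$ but not in $\sum_{I^+\subset I'}$ gives a diagonal disjoint from $\widetilde{\Delta_I}$. The delicate point is that Lemma \ref{disnon}(1) fails to give disjointness precisely when $I'\setminus\{i+1\}\subset I$, but such an $I'$ has $|I'\setminus\{i+1\}| < |I| \le i-k$ and is therefore \emph{not yet} exceptional at $Y^{[i]}_k$, so it never enters the sum in the first place; the only retained non-disjoint terms are those with $I^+\subset I'$. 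A secondary technical point, already flagged above, is justifying the single combined shift by $\sum_J D_{k+1}(J^+)$ in part (1), which rests on the centers at each blow-up level being pairwise disjoint so that Lemma \ref{lefker}(5) iterates with no cross terms.
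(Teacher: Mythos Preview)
Your proposal is correct and follows essentially the same approach as the paper, which simply defers to \cite{FM}; you have spelled out the Fulton--MacPherson induction on $k$ in detail, using Lemmas~\ref{lefker}, \ref{disnon}, \ref{trans+}, \ref{blowdiv}, and \ref{0th} in place of their analogues there, and the extra $D_S$--blow-ups are correctly absorbed via clean intersection (Lemma~\ref{lefker}(4)). One small slip: the identification $Y^{[i]}_0 = X_D[i]\times X^{n-i}$ is not literally true in the paper's blow-up order (all $D_S$, including those with $S\nsubseteq\{1,\dots,i\}$, are blown up first), but your actual argument---that $\Delta_{a^+}$ meets every prior center cleanly and hence its Chern polynomial is unchanged from $X^n$---is unaffected by this.
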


\begin{proof}
Exactly same as \cite{FM}.
\end{proof}

\begin{Prop}\label{kthlef}
Let $I'=I^+ \subset \{1, \cdots, i, i+1 \}$ such that $|I'| = i-k+1$.  Then the restriction $\widetilde{ \Delta_{I'}} \rightarrow Y^{[i]}_k$ is Lefschetz embedding, and its Lefschetz kernel is generated by

\begin{enumerate}
\item $D_kJ'$ for any $J' \subset \{1, \cdots i, i+1 \}$ that overlaps with $I'$, except if $I \subset J' \subset \{1, \cdots i, i+1 \}$.

\item $\cJ_{\Delta_{I'}/ X^n}$.

\item $x_S$ for $S  \nsupseteq I'$. 
\end{enumerate} 
\end{Prop}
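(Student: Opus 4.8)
The plan is to compute the Lefschetz kernel by following the subvariety $\widetilde{\Delta_{I'}}$ through the entire $\Delta$-blow-up sequence, from the initial stage $Y^{[1]}_0 = X_D^{[n]}$ up to $Y^{[i]}_k$, and applying Lemma~\ref{lefker} at each individual blow-up. Since $I'$ has size $i-k+1$, it is itself blown up only in passing from $Y^{[i]}_k$ to the next stage; hence at every earlier step the center is some $\widetilde{\Delta_{J'}}$ with $J'\neq I'$, so $\widetilde{\Delta_{I'}}$ survives as a smooth subvariety and, by Lemma~\ref{lefker}, its embedding stays Lefschetz. This simultaneously settles the Lefschetz assertion and reduces the kernel computation to bookkeeping of how each blow-up changes $\cJ_{\widetilde{\Delta_{I'}}}$.

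For the base case, at $Y^{[1]}_0$ Lemma~\ref{0th}(\ref{0lef}) applied with $I = I'$ gives $\cJ = (\cJ_{\Delta_{I'}/X^n},\, x_S : S\nsupseteq I')$, which is exactly generators (2) and (3); no diagonal divisor has yet been created, so (1) is empty at this stage, as it should be. For the inductive step I would classify a single blow-up along $\widetilde{\Delta_{J'}}$ by the mutual position of $J'$ and $I'$, using Lemmas~\ref{disnon}, \ref{trans+} and \ref{blowdiv} to decide which alternative of Lemma~\ref{lefker} applies: when $\widetilde{\Delta_{J'}}$ and $\widetilde{\Delta_{I'}}$ have already been separated — the generic fate of a crossing $J'$, by Lemma~\ref{disnon} — the intersection is empty and Lemma~\ref{lefker} adds the exceptional class $D_k J'$ to the kernel, accounting for generator (1); when they meet cleanly and non-trivially with neither contained in the other, the kernel is unchanged; and when $I'\subset J'$ (so $\widetilde{\Delta_{I'}}\subset\widetilde{\Delta_{J'}}$) the exceptional divisor restricts non-trivially, so $D_k J'$ is not added, which is one half of the stated exception.

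The step I expect to be the main obstacle is the remaining excluded family $I\subsetneq J'\subset\{1,\dots,i\}$ (that is, $i+1\notin J'$): here $J'$ genuinely crosses $I'$, with $I'\cap J'=I$, so a naive reading would contribute $D_k J'$, yet the statement forbids it. The resolution is that in this configuration $\widetilde{\Delta_{J'}}$ is not disjoint from $\widetilde{\Delta_{I'}}$; rather, combining Lemma~\ref{trans+}, which exhibits $\widetilde{\Delta_{I'}}=\widetilde{\Delta_{I}}\cap\widetilde{\Delta_{a^+}}$ for $a\in I$, with the pull-back identity $\pi^*(\widetilde{\Delta_{J'}})=\widetilde{\Delta_{J'}}+\widetilde{\Delta_{(J')^+}}$ of Lemma~\ref{blowdiv}, one sees that $D_k J'$ restricts non-trivially to $\widetilde{\Delta_{I'}}$ and therefore lies outside the kernel. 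I would handle this by reordering the blow-ups, exactly as in the proof of Lemma~\ref{disnon}, to isolate the interaction of $\widetilde{\Delta_{J'}}$ with $\widetilde{\Delta_{I'}}$ and then computing the restriction directly. Collecting the contributions, generators (2) and (3) persist from the base case, while (1) accumulates precisely the classes $D_k J'$ for $J'$ crossing $I'$ with $I\nsubseteq J'$, yielding the asserted presentation of the Lefschetz kernel.
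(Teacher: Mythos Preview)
Your approach is correct and is essentially the paper's argument, organized as an explicit induction rather than a one-shot application. The paper first observes that the order of blow-ups is immaterial to the final kernel, then, for overlapping $J'$, renumbers so that a common element of $I'\cap J'$ plays the role of $i{+}1$ and reads off directly from Lemma~\ref{disnon} which $\widetilde{\Delta_{J'}}$ meet $\widetilde{\Delta_{I'}}$ non-trivially. In particular, your ``main obstacle'' family $I\subsetneq J'\subset\{1,\dots,i\}$ is precisely case~(1) of Lemma~\ref{disnon} with the roles of $I'$ and $J'$ swapped, so your detour through Lemmas~\ref{trans+} and~\ref{blowdiv}, while valid, is more work than needed. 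The paper also treats separately the case $I'\cap J'=\emptyset$ (disjoint index sets, hence not overlapping), verifying that the corresponding proper transforms still intersect so that these $D_kJ'$ do not enter the kernel; you fold this into ``meet cleanly and non-trivially with neither contained in the other,'' which is fine but deserves a word of justification.
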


\begin{proof}
By lemma \ref{lefker}, $ \widetilde{\Delta_{I'}} \rightarrow Y^{[i]}_k$ is Lefschetz embedding.  

Now let's prove the statement for generators.  By lemma \ref{lefker}, we have to show that, for $J'$ which overlap  with $I'$, those exceptional cases are exactly blow up centers which intersect $\widetilde{\Delta_{I'}}$ with non-empty intersection.  The order of blow ups does not matter to the statement, so that we can change the order as we want.

First consider a case that $I' \cap J' \neq \emptyset$.  We can assume $i+1 \in I' \cap J'$ by changing numbering.  In this case, by lemma \ref{disnon}, we know exactly when the intersection is non-empty or not.

Now, consider a case that $I' \cap J' = \emptyset.$  We can assume that $J'=  \{1, \cdots, j\}$ and $I' \subset \{j+1, \cdots,j+ i+1\}$.   Then by the inductive construction of $X_D[n]$, it is obvious they intersect. 


\end{proof}

\begin{Prop}\label{ChowYk}
For $0 \leq k \leq i$, $A^*(Y^{[i]}_k)$ is the polynomial ring $A^*(X^n)[x_S, D_k I]$, where $S \subset N$ such that $|S| >1$ and $I \subset \{1, \cdots, i+1 \}$ such that either $I \subset   \{1, \cdots, i \}$ or $|I| >i-k+1$, modulo the ideal generated by

\begin{enumerate}
\item\label{1} $D_k {I} \cdot D_k {J}$ for $I$ and $J$ that overlap,

\item\label{2} $x_{S} \cdot D_k {I}$ unless $I \subset S$,

\item\label{3} $\cJ_{\Delta_{I}/X^n} \cdot D_k {I}$ for all $I$,

\item\label{4} 
\begin{enumerate}
\item $c_{a,b} ( \sum_{a, b \in I} D_k I)$ for $a, b \in \{1, \cdots, i \}$ (distinct);

\item\label{i+1con} $D_k I \cdot c_{a, i+1}( \sum_{I^{+}  \subset I'} D_k {I'} )$ for $I \subset \{1, \cdots, i \}, |I| >i-k, a \in I$ and $I^{+} = I \cup \{i+1\}$
\end{enumerate}

\item\label{5} $x_{S} \cdot x_{T}$ for $S$ and $T$ that overlap,

\item\label{6} $\cJ_{D_{S}/X^n} \cdot x_{S}$ for all $S$,

\item\label{7} $P_{D_{S}/X^n} (-\Sigma_{S' \supset S} x_{S'})$ for
all $S$.

\end{enumerate}

\end{Prop}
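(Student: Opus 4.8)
The plan is to establish the presentation by induction along the entire sequence of blow-ups that builds $X_D[n]$ out of $X_D^{[n]}$, the inductive step being a single ``block'' of simultaneous blow-ups treated by Lemma~\ref{blowchow}. The base case is $Y^{[1]}_0=X_D^{[n]}$: here there are no $\Delta$-generators, relations~(1)--(4) are vacuous, and the presentation reduces to $A^*(X^n)[x_S]$ modulo (5)--(7), which is exactly Theorem~\ref{ringD}. For the inductive step I first observe that passing from $Y^{[i]}_k$ to $Y^{[i]}_{k+1}$ blows up simultaneously the centers $\widetilde{\Delta_{I^+}}$ with $I\subset\{1,\dots,i\}$ and $|I^+|=i-k+1$; these are pairwise disjoint by Lemma~\ref{disnon}, since the only overlapping-but-meeting case available to two sets both containing $i+1$ requires $I\cap J=\emptyset$ and $|I\cup J|\le i-k$, impossible for two distinct sets of size $i-k$ once $i>k$. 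At the end of a block one has $Y^{[i]}_i=Y^{[i+1]}_0$, so the two presentations of this common ring will have to be reconciled after relabeling.

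The inductive step applies Lemma~\ref{blowchow} to this disjoint family, adjoining one new generator $D_{k+1}I^+$ per center together with its three relations. Relation~(i) of Lemma~\ref{blowchow}, $D_{k+1}I^+\cdot D_{k+1}J^+$ for distinct centers, gives the new instances of relation~(1), since distinct equal-sized $I^+,J^+$ overlap. The Lefschetz-kernel relation~(ii), $\cJ_{\widetilde{\Delta_{I^+}}/Y^{[i]}_k}\cdot D_{k+1}I^+$, is read off from Proposition~\ref{kthlef}: multiplying the three kinds of kernel generators ($x_S$ with $S\nsupseteq I^+$, the classes $D_kJ'$, and $\cJ_{\Delta_{I^+}/X^n}$) by $D_{k+1}I^+$ produces precisely the new instances of relations~(2), (1) and (3). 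The Chern-polynomial relation~(iii), $P_{\widetilde{\Delta_{I^+}}/Y^{[i]}_k}(-D_{k+1}I^+)$, is evaluated from Proposition~\ref{kthchern}(2): substituting $t=-D_{k+1}I^+$ into $(t+D_kI)\,c_{a,i+1}\!\left(-t+\sum_{I^+\subset I'}D_kI'\right)$ and applying the pullback rule below, the prefactor collapses to $D_{k+1}I$ and the argument to $\sum_{I^+\subset I'}D_{k+1}I'$ (now including $I'=I^+$), so that (iii) is exactly the new instance of relation~(4b).

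The remaining work is to pull back the relations already present at level $k$ and to match the two ideals. The governing rule is Lemma~\ref{blowdiv}: under $\pi^*$ only the critical divisors change, with $\pi^*D_kJ=D_{k+1}J+D_{k+1}J^+$ for $J\subset\{1,\dots,i\}$ of size $i-k$, while every other generator class (all $x_S$, and every $D_kI'$ with $i+1\in I'$) pulls back unchanged; this splitting, justified by Lemma~\ref{trans+}, is exactly what produces the collapse in~(iii). Relations~(5)--(7) involve only the $x_S$ and classes from $A^*(X^n)$, so they are inert under these diagonal blow-ups and survive verbatim. Relations~(1)--(4) are pulled back through the splitting rule and expanded; the key point is that each cross term that appears (for example $D_{k+1}I^+\cdot D_{k+1}J$ coming from a type-(1) relation $D_kI\cdot D_kJ$, or the extra $D_{k+1}I^+$-terms appearing inside the argument of a type-(4a) class) is again one of the claimed level-$(k+1)$ relations, so that the pulled-back ideal lands inside the claimed ideal, while conversely each claimed relation is recovered by combining a pulled-back relation with the new relations of the previous paragraph.

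The main obstacle is precisely this last step: verifying that after the substitution $\pi^*D_kJ=D_{k+1}J+D_{k+1}J^+$ the pulled-back ideal together with the three new families equals the claimed ideal on the nose, so that every expanded cross term is absorbed and no claimed relation is lost. This is routine but delicate, following the template of \cite{FM}; the new features are that the divisorial generators $x_S$ coming from $D$ must be carried along inertly, and that the Chern data is the relative class $c_{a,i+1}$ rather than a pure diagonal class. The one transitional point to record is that the final step of each block, $k=i-1\to k=i$, blows up the size-two diagonals $\widetilde{\Delta_{\{a,i+1\}}}$, where Proposition~\ref{kthchern}(1) supplies a Chern polynomial with no prefactor; using the convention $\Delta_{\{a\}}=X^n$, i.e.\ $D_i\{a\}=1$, this makes relation~(4b) with $|I|=1$ coincide with relation~(4a) for the pair $(a,i+1)$, which is exactly the reconciliation needed across $Y^{[i]}_i=Y^{[i+1]}_0$. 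Once the ideal identity is checked for a single block step, the induction closes and yields the presentation for every $(i,k)$.
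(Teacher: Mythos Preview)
Your proposal is correct and follows essentially the same inductive strategy as the paper: base case via Theorem~\ref{ringD}, inductive step $Y^{[i]}_k\to Y^{[i]}_{k+1}$ via Lemma~\ref{blowchow} applied to the disjoint centers (Lemma~\ref{disnon}), with the Lefschetz-kernel relations read off Proposition~\ref{kthlef} to obtain (1)--(3), the Chern-polynomial relations read off Proposition~\ref{kthchern} together with the pullback rule of Lemma~\ref{blowdiv} to obtain (4), and (5)--(7) carried along inertly. The paper's own proof is a terse sketch that defers the details of relation~(4) and the ideal-matching to \cite{FM}; you have simply unpacked those details explicitly, including the correct observation that the final step of each block uses part~(1) of Proposition~\ref{kthchern} and produces the new instances of (4a), which renders (4b) redundant at $k=0$.
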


\begin{proof}

For $Y^{[1]}_0$, it is just theorem \ref{ringD}.  
Also note that $Y^{[i]}_i = Y^{[i+1]}_0$ and the statement for $Y^{[i]}_i$ will imply $Y^{[i+1]}_0$ because condition (\ref{i+1con}) is vacuous when $k=0$.

We only need to prove that the statement for $Y^{[i]}_k $ will imply the one for$Y^{[i]}_{k+1} $.  The conditions (\ref{5}) to (\ref{7}) are coming from blow up along $D_S$ and these are not new.

For (\ref{4}), proof is exactly same as \cite{FM}.

(\ref{1}), (\ref{2}), and (\ref{3}) follow from proposition \ref{kthlef}.

\end{proof}

Especially, we have

\begin{Thm}
The Chow ring $A^*(X_D [n])$ is isomorphic to the polynomial
ring $ A^*(X^n) [x_S, y_I] $ modulo the ideal generated by

\begin{enumerate}
\item $y_{I} \cdot y_{J}$ for $I$ and $J$ that overlap,

\item $x_{S} \cdot x_{T}$ for $S$ and $T$ that overlap,

\item $x_{S} \cdot y_{I}$ unless $I \subset S$,

\item $\cJ_{\Delta_{I}/X^n} \cdot y_{I}$ for all $I$,

\item $\cJ_{D_{S}/X^n} \cdot x_{S}$ for all $S$,

\item $c_{a,b} ( \sum_{a, b \in I} y_I)$ for $a, b \in \{1, \cdots, n \}$ (distinct),

\item $P_{D_{S}/X^n} (-\Sigma_{S' \supset S} x_{S'})$ for
all $S$.

\end{enumerate}

\end{Thm}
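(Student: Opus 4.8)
The plan is to realize $X_D[n]$ as the terminal space of the blow-up tower already assembled in this subsection and then to read off its Chow ring directly from Proposition \ref{ChowYk}. Recall that the tower begins at $Y^{[1]}_0 = X_D^{[n]}$, whose Chow ring is Theorem \ref{ringD}, and that the identifications $Y^{[i]}_i = Y^{[i+1]}_0$ splice the blocks together, so that the full sequence of blow-ups producing $X_D[n]$ terminates at $Y^{[n-1]}_{n-1}$. Thus $X_D[n] = Y^{[n-1]}_{n-1}$, and it suffices to specialize Proposition \ref{ChowYk} to $i = k = n-1$.

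First I would pin down the variable set. At $i = k = n-1$ the defining condition on $I$ in Proposition \ref{ChowYk} reads ``$I \subset \{1,\dots,n-1\}$ or $|I| > 1$'', which is met by every $I \subset N$ with $|I| \ge 2$; renaming $y_I := D_{n-1}I$ recovers exactly the polydiagonal generators of the theorem, while the $x_S$ carry over unchanged. Six of the seven families of relations then transcribe verbatim: the relation $D_kI\cdot D_kJ$ for overlapping $I,J$ becomes family (1), $x_S\cdot x_T$ becomes (2), $x_S\cdot D_kI$ (unless $I\subset S$) becomes (3), $\cJ_{\Delta_I/X^n}\cdot D_kI$ becomes (4), $\cJ_{D_S/X^n}\cdot x_S$ becomes (5), and $P_{D_S/X^n}(-\Sigma_{S'\supset S}x_{S'})$ becomes (7). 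The only genuine work is to reconcile the two-part Chern-polynomial family of Proposition \ref{ChowYk} with the single symmetric family (6) of the theorem.

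For that reconciliation I would argue as follows. At $i = n-1$ the first part of that family yields $c_{a,b}(\sum_{a,b\in I} y_I)$ for all distinct $a,b \in \{1,\dots,n-1\}$, where the sum already ranges over \emph{every} $I$ containing both $a$ and $b$ (including those containing $n$); these are precisely the relations of family (6) for pairs avoiding $n$. The remaining relations of (6), namely $c_{a,n}(\sum_{a,n\in I} y_I)$, arise from the second part specialized to the singleton $I = \{a\}$, under the convention $\Delta_{\{a\}} = X^n$ for which the divisor prefactor degenerates to the unit class. Finally I would verify that the second part for $|I| \ge 2$ is redundant: multiplying the relation $c_{a,n}(\sum_{a,n\in J} y_J) = 0$ by $y_I$ (with $a \in I \subset \{1,\dots,n-1\}$) annihilates every term indexed by a $J \ni a,n$ with $I \not\subset J$, since such $J$ contains $n \notin I$ and therefore overlaps $I$, whence $y_I y_J = 0$ by family (1); what survives is exactly $y_I \cdot c_{a,n}(\sum_{I\cup\{n\}\subset J} y_J)$, which is the second part. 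So the two families of Proposition \ref{ChowYk} collapse onto family (6) and back.

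The main obstacle is this last consolidation step, that is, showing the block-by-block Chern-polynomial relations of the intermediate stages coalesce into one $S_n$-symmetric family once the tower is complete. Its heart is the redundancy computation above, which is the relative analogue of the corresponding step in \cite{FM} and leans essentially on the overlapping relations $y_I y_J = 0$ of family (1); everything else is bookkeeping in the specialization $i = k = n-1$, together with the observation that the singleton case supplies exactly the pairs involving $n$.
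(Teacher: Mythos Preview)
Your proposal is correct and follows exactly the route the paper intends: the paper derives the theorem as an immediate specialization of Proposition~\ref{ChowYk} (the text literally reads ``Especially, we have''), and you supply the bookkeeping for $i=k=n-1$ that the paper suppresses. Your reconciliation of relation~(4) of Proposition~\ref{ChowYk} with family~(6) --- reading the singleton case of (4b) via the convention $\Delta_{\{a\}}=X^n$ used in the proof of Lemma~\ref{disnon}, and absorbing the $|I|\ge 2$ case of (4b) using the overlapping relations --- is precisely the \cite{FM} argument the paper defers to throughout this subsection; nothing further is needed.
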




\def\cprime{$'$}
\providecommand{\bysame}{\leavevmode\hbox
to3em{\hrulefill}\thinspace}

\end{document}